\newcommand{\kk}{\mathbb{K}}
\newcommand{\CC}{\mathbb{C}}
\newcommand{\OO}{\mathcal{O}}
\newcommand{\deltabar}{\bar{\partial}}
\DeclareMathOperator{\im}{Im}
\DeclareMathOperator{\Hom}{Hom}
\DeclareMathOperator{\Art}{Art}
\DeclareMathOperator{\Set}{Set}
\DeclareMathOperator{\Spec}{Spec}
\DeclareMathOperator{\Id}{Id}
\DeclareMathOperator{\rk}{rk}
\DeclareMathOperator{\MC}{MC}
\DeclareMathOperator{\Def}{Def}
\DeclareMathOperator{\End}{End}
\DeclareMathOperator{\Pic}{Pic}
\newtheorem{theorem}{Theorem}[section]
\newtheorem{corollary}[theorem]{Corollary}
\newtheorem{proposition}[theorem]{Proposition}
\newtheorem{lemma}[theorem]{Lemma}
\theoremstyle{definition}
\newtheorem{definition}[theorem]{Definition}
\newtheorem{remark}[theorem]{Remark}
\definecolor{rosso}{RGB}{162,0,0}
\definecolor{verde}{RGB}{0,100,0}
\definecolor{blu}{RGB}{0,0,162}
\title[On the local structure of the Brill-Noether locus]{On the local structure  of the Brill-Noether locus of locally free sheaves on a smooth variety}
\author{Donatella Iacono}
\address{\newline Dipartimento di Matematica,
\newline  Universit\`a degli Studi di Bari Aldo Moro,
\hfill\newline Via E. Orabona 4,
70125 Bari, Italy.}
\email{donatella.iacono@uniba.it}
\author{Elena Martinengo}
\address{
\newline
Dipartimento di Matematica ``Giuseppe Peano'',
\hfill\newline
 Universit\`a degli Studi di Torino,
 \hfill\newline
 via Carlo Alberto 10,
10123 Torino, Italy}
\email{elena.martinengo@unito.it}
\keywords{Brill-Noether theory, Vector bundle and their moduli, Deformations and infinitesimal methods, Differential graded Lie algebras, functor of Artin rings.}
\subjclass[]{14B10, 14B12, 14D15, 14D20, 14F05, 14H60, 13D10, 17B70. }
\begin{document}
\begin{abstract}

 We study  the functor $\Def_E^k$ of infinitesimal deformations of a locally free sheaf $E$ of $\OO_X$-modules  on a smooth variety $X$, such that at least $k$ independent sections lift to the deformed sheaf.
We deduce some information on the $k$-th Brill-Noether locus of $E$, such as the description of the tangent cone at some singular points, of the tangent space at some smooth ones and some links between the smoothness of the functor $\Def_E^k$ and the smoothness of some well know deformations functors and their associated moduli spaces. 
As a tool for the investigation of $\Def_E^k$, we study infinitesimal deformations of the pairs $(E,U)$, where $U$ is a linear subspace of sections of $E$. We generalise many classical
results concerning the moduli space of coherent systems to the case where E has any rank and X any dimension.
This includes a description of its tangent space and the link between smoothness and the injectivity of the Petri
map.

\end{abstract}

\maketitle 
\tableofcontents
\addtocontents{toc}{\protect\setcounter{tocdepth}{1}}

\section*{Introduction}

Let $X$ be a smooth projective  variety of dimension $n$ and  $E$ be a locally free sheaf of $\OO_X$-modules on $X$. We are interested in the deformations of $E$ together with some of its sections.

Much is known about the classical case of a line bundle $L$ on a smooth projective curve $C$.
Classical Brill-Noether theory concerns with is concerned with the subvarieties $W_d^k(C)$  of $\Pic^d(C)$ of linear systems on $C$ of degree $d$ and projective dimension at least $k$ or equivalently of line bundles on $C$ of degree $d$ having at least $k+1$ independent  sections. Properties of $W_d^k(C)$ like non emptiness, connectedness, irreducibility and dimension were largely investigated, successfully  determined and summarised in \cite{ACGH}.

During the last few years, several generalisations of this problem were investigated.
Many efforts have been carried out to analyse the moduli space of stable vector bundles of fixed rank and degree on a curve, having at least $k$ independent sections.
In this context, less is known, about dimension, connected components and singular locus, see for instance \cite{Gt} and reference therein for an overview on this case.

To overcome the difficulties of the  Brill-Noether theory of vector bundles, the concept of coherent systems was introduced,  see \cite{nt,king,lepoit, bgmm03, bgmm, newsteadquestion} and reference therein. 
A coherent system on an algebraic variety is the pair of a vector
bundle $E$ of fixed rank $n$ and degree $d$ together with a linear subspace $U$ of sections of $E$ of dimension $k$.
There is a notion of stability which allows to construct the moduli spaces of coherent systems fixing the parameters $(n,d,k)$, see \cite{king,lepoit}.
The relation between these moduli spaces and Brill-Noether theory is obvious: 
any vector bundle which occurs as part of a coherent system must have at least a prescribed number of  independent sections. Conversely, a vector bundle with a prescribed number of linearly independent sections
determines, in a natural way, a coherent system.
This defines a forgetful map $(E,U) \to E$ surjecting to the corresponding Brill-Noether locus. This map is an obvious generalisation of the classical projection $G^k_d(C) \to W^k_d(C)$, where $G^k_d(C)$ is the variety that parametrises the linear systems of degree $d$ and projective dimension exactly $k$ on a curve $C$. In many of the above works [loc.cit.],   the aim is to deduce as much information as possible on the Brill-Noether loci  from the moduli spaces of coherent systems that are easier to describe. 

In \cite{he}, the infinitesimal study of the moduli space of coherent system on varieties was carried out, allowing a description of the tangent space and an obstruction space, see also \cite[Section 3]{bgmm}.

On the other hand, in \cite{pardini} the authors generalised
Brill-Noether theory to line bundles on smooth projective varieties of dimension greater than one. They  were able to prove non emptiness and find the dimension of the Brill-Noether loci of a curve $C$ over a smooth surface $X$ of maximal Albanese dimension, under the hypothesis that a properly defined Brill-Noether number is positive and under some mild additional assumptions.

Finally, the general case of vector bundles  on varieties of higher dimension is still quite mysterious.
In \cite{costa}, the authors prove the existence of a Brill-Noether type stratification of the moduli spaces of stable vector bundles on a smooth projective varieties with fixed Chern classes under the assumption that all the cohomology groups of degree greater than one vanish. 
 Some properties known in the classical Brill-Noether theory for line bundles on curves are expected to hold for the  general case of vector bundles on higher dimensional varieties too. Many of them are known to experts but often there is no reference for them.
 
\medskip

 In this paper, we are interested in the  infinitesimal deformations of a locally free sheaf $E$ of $\OO_X$-modules on a smooth variety $X$ that preserve at least a prescribed number of independent sections. As a tool for this analysis we also study infinitesimal deformations of $E$ with a fixed subspace of global sections $U$. We focus on the local study of the Brill-Noether loci and of the moduli space  of  local system respectively,  on all   properties we can predict using deformation theory and we do not concentrate on its global structure.

It is nowadays almost accepted that the most appropriate way to analyse
locally a moduli problem through infinitesimal deformations is via derived algebraic geometry.
 The philosophy behind, called the Deligne's principle, may be formulated in the following way: every deformation problem on a field of characteristic zero is controlled  by a differential graded  Lie  algebra (dgLa) via the Maurer-Cartan equation and gauge equivalence. 
A rigorous proof of  this philosophy was  independently  given by Lurie \cite{lurie}  and Pridham \cite{pridham} via an equivalence of infinity categories between dgLa and formal moduli problem. 
 The dgLa associated with a certain deformation problem is defined only up to quasi-isomorphism and it encodes much information about the problem. For instance, its first cohomology group coincides with the Zariski tangent space of the  moduli space   and its second cohomology group  is an obstruction space for the problem.

This approach has been successfully applied in many cases such as deformations of  locally free sheaves \cite{Fuk}, locally free sheaves with prescribed cohomological dimensions \cite{ElenaFormal}, coherent sheaves \cite{FIM}, pairs of manifold and coherent sheaves  \cite{Dona-Marco}.

Inspired by this philosophy and following \cite{elenatesi}, we find the dgLa controlling infinitesimal deformations  of a pair $(E,U)$ as above and are able to recover and generalise some classical results. 
Later we deduce from this study and the obvious forgetful maps of functors of deformations, some information about infinitesimal deformations of $E$ such that at least a certain number of independent sections lift.
Note that such deformations are basically more difficult to study than the deformations of $(E,U)$,  since they do not define a deformation functor and they do not classically fall within the reach of Deligne's principle.  The dgLas's approach  completely describes deformations of the pairs $(E,U)$, for any rank of $E$ and any dimension of $X$, and provides some information on deformations of $E$ with at least a certain number of independent sections.  An approach based on dgl pairs was used in \cite{Budur-Wang} and \cite{Budur-Rubio} to analyse the cohomology jump functors.  There, the authors extend Deligne’s principle to deformations with cohomology constraints and in \cite{doan} this extension is fully proved.  The fact that these are not deformation functors is still an obstacle to a full understanding of the problem.

Our main motivation here was to test the power of the derived techniques in this very classical context, where the classical theory has not yet answered all questions and predict some properties the associated moduli spaces has to satisfy.
In particular, using this  alternative approach, we are able to show some results, probably expected by the experts.  This can be considered a first step to tackle this kind of deformations.

Here, we restrict our attention  to holomorphic locally free sheaf $E$ of $\OO_X$-modules  on a smooth complex manifold.
 In \cite{IM}, we extend these techniques to describe a dgLa that controls deformations of $(E,U)$ on any algebraically closed field of characteristic zero $\kk$, 
 using Thom-Whitney constructions.
This would offer the possibility to broaden the classical results of Brill-Noether theory over any $\kk$.  Once we have an explicit description of the dgLa, we aim to apply this powerful approach to investigate formality or abelian homotopy  and deduce information about smoothness   of the Brill-Noether locus.

\medskip 

The article is organised as follows. For the convenience of the reader, we first collect some basic notions on deformation functors, differential graded Lie algebras and the link between them. 

In the second section, we briefly recall the definition of deformations of a locally free sheaf and exhibit the dgLa that controls these deformations following \cite{Fuk}.

The third section is finally devoted to the study of deformations of pairs $(E,U)$, where $E$ is a locally free sheaf of $\OO_X$-modules on a smooth variety $X$ and $U$ is a linear subspace of its sections. For the basic definitions and the identification of the dgLa that controls these deformations, we follow \cite{elenatesi}. 
Moreover, we are able to generalise some classical results: the condition for a section of a locally free sheaf to be extended to a first order deformation and the description of the image of the Petri map (Proposition \ref{prop.sezione si estende se cup product va a zero}).  We also describe the tangent space to the functor of deformations of $(E,U)$ in the case $U=H^0(E)$ (Corollary \ref{Cor. TDef(E,H0(E))}).

In Section $4$, we specialize the study of deformations of pairs $(E,U)$ to the case of a smooth curve. In Lemma \ref{lemma.equiv h2 zero e petri iniettiva}, we find two equivalent conditions to the injectivity of the Petri map, which is known to be crucial in the classical study of Brill-Noether loci. 
In Proposition \ref{prop.beta e liscezza}, we compute the dimension of the tangent space to the functor of deformations of a pair $(E,U)$ and find equivalent conditions to its smoothness, generalising the classical results concerning the variety $G^r_d(C)$.

Section $5$ is devoted to our main aim. Let $E$ be a locally free sheaf $E$ of $\OO_X$-modules on a smooth variety $X$, such that $\dim H^0(X,E) \geq k$. We study infinitesimal deformations of $E$ such that at least $k$ independent sections of $E$ lift. 
First we define this kind of deformations and observe that the functor $\Def_E^k$ associated to them is not a deformation functor  in the sense of Definition \ref{def. def functor}. Theorem \ref{teorema tangente Def^r} describes the first order deformations $\Def_E^k(\CC[\epsilon])$ and the vector space generated by them, suggesting that the locally free sheaves with at least $k+1$ independent sections are singular points in the moduli space of  sheaves with at least $k$ sections. 
Propositions \ref{prop.alpha sur equivalenza di liscezze} and \ref{prop.H2=0 liscezza} deal with the smoothness of the functor $\Def_E^k$, linking it with other known conditions.   
\medskip

 We indicate with $\kk$ an algebraic closed field of characteristic zero.  
We will work often over the field $\CC$ of complex numbers.  We denote by $\kk[\epsilon]$ the ring of dual number, meaning $\epsilon^2=0$.

\subsection*{Acknowledgements}  
 We wish to thank Marco Manetti for some inspiring mathematical discussions,  Andrea Petracci for answering our questions  and Peter Newstead for precious comments and suggestions on a first draft of this paper. 
We also thank Nero Budur for having drawn our attention to \cite{Budur-Wang} and \cite{Budur-Rubio}.
We also warmly thank the referee for all the useful comments and suggestions which have improved the paper.

\section{Preliminaries on deformation functors} 
 The first part of this section is dedicated to some preliminaries on functors of Artin rings and deformation functors we will need in the article. 
 In the second part, we introduce
the basic definitions of differential graded Lie algebras and the deformation theory associated to them. 
For a complete presentation of the topics, we refer the reader to  \cite{Schlessinger, FantechiManetti, Man Pisa, Man Roma, ManettiSeattle}.
\medskip

\bigskip 
\subsection{Theory of deformation functors}

\begin{definition} 
A \emph{functor of Artin rings} is a covariant functor $F :\Art_\kk \to \Set$, 
such that $F(\kk)=* $, where $*$ is the one point set, 
$\Set$ denotes the category of sets in a fixed universe and $\Art_{\kk}$ the category of local Artinian $\kk$-algebras with residue fields $\kk$.
\end{definition}
Consider the following diagram whose objects and arrows are in $\Art_\kk$
\[ \xymatrix{ B \times_A C \ar[r]\ar[d]  & C \ar[d]\\ B \ar[r] & A,}  \]
applying a functor $F: \Art_\kk \to \Set$, we get a map
\[\eta: F(B\times_A C) \to F(B) \times_{F(A)} F(C).  \]

\begin{definition} \label{def. def functor}
A functor of Artin rings $F$ is called a \emph{deformation functor} if it satisfies the following conditions:
\begin{itemize}
\item $\eta$ is surjective whenever $B\to A$ is surjective
\item $\eta$ is an isomorphism whenever $A=\kk$. 
\end{itemize}
The functor $F$ is called \emph{homogeneous}, if $\eta$ is an isomorphism, whenever $B\to A$ is surjective.
\end{definition}
The name comes from the fact that most functors arising by deforming geometric objects are deformation functors and some of them are actually homogeneous.
\begin{remark}
Our definition of a deformation functor follows \cite{Man Pisa}. The first condition here is the classical Schlessinger's condition (H1) in \cite{Schlessinger}, while the second is slightly more restrictive than the (H2) of \cite{Schlessinger}. We assume these conditions
because they guarantee good properties for tangent spaces and obstruction theory as stated in Proposition \ref{prop.sp tg} and Theorems \ref{teo.univ ob} and \ref{teo.univ rel ob}.
For more details see \cite[Example 6.8]{FantechiManetti}.
\end{remark}

\begin{proposition}  \label{prop.sp tg}
Let $F$ be a deformation functor, the set $t_F=F(\kk[\epsilon]) $ has a natural structure of $\kk$-vector space. If $\varphi: F \to G$ is a morphism of deformation functors the induced map $\varphi: t_F \to t_G$ is linear. 
\end{proposition}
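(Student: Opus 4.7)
The plan is to use the Schlessinger-type conditions in Definition \ref{def. def functor} to transport the $\kk$-vector space structure from $\kk[\epsilon]$ to $t_F$. First I would observe that the key instance to invoke is the second condition of Definition \ref{def. def functor} applied to $A=\kk$ and $B=C=\kk[\epsilon]$: this forces the natural map
\[
\eta: F(\kk[\epsilon]\times_\kk \kk[\epsilon]) \longrightarrow F(\kk[\epsilon])\times F(\kk[\epsilon])
\]
to be a bijection. Since $\kk[\epsilon]\times_\kk \kk[\epsilon]\cong \kk[\epsilon_1,\epsilon_2]/(\epsilon_1^2,\epsilon_2^2,\epsilon_1\epsilon_2)$, every pair $(x,y)\in t_F\times t_F$ lifts uniquely to an element of $F$ evaluated at this ring.

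Next I would define the two operations. For addition, I use the $\kk$-algebra morphism
\[
s:\kk[\epsilon_1,\epsilon_2]/(\epsilon_1^2,\epsilon_2^2,\epsilon_1\epsilon_2)\longrightarrow \kk[\epsilon],\qquad \epsilon_1,\epsilon_2\mapsto \epsilon,
\]
and set $x+y := F(s)(\eta^{-1}(x,y))$. For scalar multiplication by $\ll\in\kk$ I use the $\kk$-algebra morphism $m_\ll:\kk[\epsilon]\to \kk[\epsilon]$, $\epsilon\mapsto \ll\epsilon$, and set $\ll\cdot x := F(m_\ll)(x)$. The zero element is the image of $F(\kk)=\{*\}$ under the map induced by the inclusion $\kk\hookrightarrow \kk[\epsilon]$.

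Then I would verify the vector space axioms. Associativity and commutativity of addition reduce, via the bijection $\eta$ applied to the triple fiber product $\kk[\epsilon]\times_\kk \kk[\epsilon]\times_\kk \kk[\epsilon]$, to the commutativity of obvious diagrams of $\kk$-algebra morphisms (e.g.\ the swap $\epsilon_1\leftrightarrow \epsilon_2$ for commutativity). The identities $0+x=x$, $1\cdot x=x$ and $(\ll+\mu)\cdot x=\ll\cdot x+\mu\cdot x$, $(\ll\mu)\cdot x=\ll\cdot(\mu\cdot x)$ follow similarly from the corresponding identities at the level of $\kk$-algebra morphisms: for instance $(\ll+\mu)\cdot x=\ll\cdot x+\mu\cdot x$ comes from the commutative triangle built out of $m_\ll$, $m_\mu$, $s$ and $m_{\ll+\mu}$. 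The additive inverse of $x$ is $(-1)\cdot x$.

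For the second statement, if $\varphi:F\to G$ is a morphism of deformation functors then naturality gives a commutative square expressing that $\varphi$ commutes with $F(s)$, $F(m_\ll)$ and the bijections $\eta_F$, $\eta_G$; hence $\varphi(x+y)=\varphi(x)+\varphi(y)$ and $\varphi(\ll\cdot x)=\ll\cdot \varphi(x)$. The only step requiring genuine care is checking that the operations are well defined and satisfy the axioms; the bijectivity of $\eta$ when $A=\kk$ is precisely what makes all ambiguity disappear, so I expect no real obstacle beyond diagrammatic bookkeeping.
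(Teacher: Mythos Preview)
Your proposal is the standard correct argument for this classical fact; the construction of addition via the bijection $\eta:F(\kk[\epsilon]\times_\kk\kk[\epsilon])\to t_F\times t_F$ and of scalar multiplication via $F(m_\ll)$ is exactly the right approach, and your outline of the axiom verifications is sound. Note however that the paper does not actually supply a proof of this proposition: it is stated as a known preliminary result, with the reader referred to the cited literature (Schlessinger, Fantechi--Manetti, Manetti) for details. So there is no paper proof to compare against; your write-up would serve perfectly well as the omitted argument.
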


\begin{definition}
Let $F$ be a deformation functor.
The vector space $t_F=F(\kk[\epsilon])$ is called the \emph{tangent space} to $F$. 
\end{definition}

\begin{definition} \label{def.smoothness}
A morphism of functors of Artin rings $\varphi: F \to G$ is called \emph{smooth} if 
for every surjection $B \to A$ in $\Art_\kk$, the map
\[F(B) \to G(B) \times_{G(A)} F(A)\] is also surjective.  

A functor of Artin rings $F$ is \emph{smooth} if the morphism $F \to *$ is smooth, i.e. if $F(B)\to F(A)$ is surjective for every surjective morphism $B\to A$ in $\Art_\kk$. 
\end{definition}

\begin{remark}
Note that, if $\varphi: F \to G$ is smooth, then the induced map $F(A) \to G(A)$ is surjective for all $A\in \Art_\kk$.
\end{remark}

\begin{proposition}\label{prop liscezza funtori}
Let $\varphi: F \to G$ a smooth morphism of functors of Artin rings. Then: 
$F$ is smooth if and only if $G$ is smooth. 
\end{proposition}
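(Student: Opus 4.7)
The plan is to unwind Definition \ref{def.smoothness} for a smooth morphism of functors and chase lifts through the fiber product, proving each implication of the biconditional separately. The key preliminary observation, already recorded in the remark after Definition \ref{def.smoothness}, is that smoothness of $\varphi: F \to G$ applied to the canonical surjection $B \twoheadrightarrow \kk$ shows that $\varphi(B): F(B) \to G(B)$ is surjective for every $B \in \Art_\kk$; we will use this in the second direction.

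First I would prove that if $G$ is smooth then $F$ is smooth. Fix a surjection $B \twoheadrightarrow A$ in $\Art_\kk$ and an element $\xi \in F(A)$; we need to lift $\xi$ to $F(B)$. Smoothness of $G$ yields some $\eta \in G(B)$ mapping to $\varphi(\xi) \in G(A)$, so the pair $(\eta, \xi)$ is an element of the fiber product $G(B) \times_{G(A)} F(A)$. Applying smoothness of $\varphi$ to the surjection $B \twoheadrightarrow A$ produces a lift $\tilde{\xi} \in F(B)$ whose image is $(\eta, \xi)$; projecting to the second factor shows that $\tilde{\xi}$ restricts to $\xi$ in $F(A)$, so $F(B) \to F(A)$ is surjective, as required.

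Next I would prove the reverse direction. Fix a surjection $B \twoheadrightarrow A$ in $\Art_\kk$ and an element $\eta \in G(A)$. By the preliminary observation, $\varphi(A): F(A) \to G(A)$ is surjective, so we may choose $\xi \in F(A)$ with $\varphi(\xi) = \eta$. Since $F$ is smooth, $\xi$ lifts to some $\tilde{\xi} \in F(B)$, and then $\varphi(B)(\tilde{\xi}) \in G(B)$ restricts to $\varphi(A)(\xi) = \eta$ in $G(A)$. Hence $G(B) \to G(A)$ is surjective, proving that $G$ is smooth.

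There is no real obstacle here: the proof is a pure diagram/definition chase, and the only mildly subtle ingredient is recognising that smoothness of $\varphi$, specialised to the surjection $B \twoheadrightarrow \kk$, forces $\varphi(B)$ itself to be surjective, which is what allows the second implication to start with an actual element of $F(A)$ above the given $\eta \in G(A)$.
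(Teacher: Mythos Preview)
Your proof is correct and follows essentially the same approach as the paper: both directions are handled by the same diagram chase, using the surjectivity of $\varphi(A)$ (which the paper records via a commutative square, and you spell out element-wise) for the implication $F$ smooth $\Rightarrow$ $G$ smooth, and the fiber-product lifting property of $\varphi$ for the implication $G$ smooth $\Rightarrow$ $F$ smooth. The only cosmetic difference is that you treat the two implications in the opposite order.
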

\begin{proof}

Let $ B \to A$ be a surjection in $\Art_\kk$.

$(\Rightarrow):$
Consider the following commutative diagram
\[  \xymatrix{  F(B)  \ar@{>>}[r] \ar@{>>}[d]^{\varphi_B} & F(A) \ar@{>>}[d]^{\varphi_A}  \\
G(B) \ar[r]    & G(A) } \]
in which the vertical arrow are surjective by smoothness of $\varphi$ and the upper horizontal arrow is surjective by smoothness of $F$. Thus the lower arrow is surjective too and $G$ is smooth. 

$(\Leftarrow):$ 
Let $f_A \in F(A)$. Since $G$ is smooth, there exists an element $g_B \in G(B)$ that lifts $\varphi_A(f_A) \in G(A)$. By smoothness of $\varphi$, the element $(f_A, g_B) \in F(A) \times_{G(A)} G(B)$ has a pre-image $f_B\in F(B)$, that assures the surjectivity of $F(B) \to F(A)$ and then the smoothness of $F$. 

\end{proof}

We now introduce the notion of obstruction theory that is crucial in the study of deformations.

By a \emph{small extension} in $\Art_\kk$ we mean an exact sequence
\[ e: 0\to J \to B \stackrel{\varphi}{\to} A \to 0\]
where $\varphi: B \to A$ is a morphism in $\Art_\kk$ and $J$ is an ideal of $B$ 
 annihilated by the maximal ideal $m_B$. In particular $J$ is a finite dimensional vector space over $B/m_B = \kk$.
\begin{definition}
Let $F$ be a functor of Artin rings. An \emph{obstruction theory} $(V, v_e)$ for $F$ is the
data of a $k$-vector space $V,$ called \emph{obstruction space}, and for every small extension in $\Art_\kk$:
\[  e: 0\to J \to B \stackrel{\varphi}{\to} A \to 0\]
of an obstruction map $v_e : F(A) \to V \otimes_\kk J$ satisfying the following properties:
\begin{itemize}
\item If $ a\in F(A)$ can be lifted to $F(B)$ then $v_e(a) = 0$.
\item (base change) For every morphism $\alpha: e_1\to e_2$ of small extensions, i.e. for every commutative diagram
\begin{equation} \label{morph of small ex}\xymatrix{  0  \ar[r]& J_1 \ar[r]\ar[d]^{\alpha_J} & B_1 \ar[r]\ar[d]^{\alpha_B} &  A_1 \ar[r]\ar[d]^{\alpha_A} & 0 \\
0  \ar[r]& J_2 \ar[r] & B_2 \ar[r] &  A_2 \ar[r] & 0  } \end{equation}
we have $v_{e_2}(\alpha_A(a))=(Id_V\otimes \alpha_J)(v_{e_1}(a))$ for every $a\in F(A_1)$.
\end{itemize}
An obstruction theory $(V, v_e)$ for $F$ is called \emph{complete} if the converse of
the first condition holds, i.e. the lifting exists if and only if the obstruction vanishes.
\end{definition}
\begin{remark}
Note that, if $F$ is smooth then all the obstruction maps are trivial.
The inverse holds if the obstruction theory is complete. 

Clearly if $F$ admits a complete obstruction theory then it admits infinitely ones; it is in
fact sufficient to embed $V$ in a bigger vector space. One of the main interest is to look for
the smallest complete obstruction theory.
\end{remark}

\begin{definition} A \emph{morphism of obstruction} theories $(V, v_e) \to (W,w_e)$ is a linear map
$\varphi: V \to W$, such that $w_e = \varphi(v_e)$ for every small extension $e$. 
An obstruction theory $(O_F , ob_e)$  for $F$ is called \emph{universal} if for every obstruction theory $(V, v_e)$ for $F$ there exists a unique morphism $(O_F , ob_e)\to(V, v_e)$.
\end{definition}

\begin{theorem}\cite[Theorem 3.2]{FantechiManetti} \label{teo.univ ob}
Let $F$ be a deformation functor, then there exists a universal obstruction
theory $(O_F , ob_e)$ for $F$. Moreover the universal obstruction theory is complete and every element of the vector space $O_F$ is of the form $ob_e(a)$ for some small extension
\[ e: 0\to \kk \to B \to A \to 0\] 
and some $a \in F(A)$.
\end{theorem}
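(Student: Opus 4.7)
My plan is to construct $O_F$ as the universal recipient of an obstruction assignment, by taking a free $\kk$-vector space on all possible ``obstruction data'' and quotienting by the minimal set of relations forced by the axioms. The key inputs are the Schlessinger-type condition (H1), guaranteeing that $\eta: F(B \times_A C) \to F(B) \times_{F(A)} F(C)$ is surjective whenever $B \to A$ is, and the observation that any small extension $0 \to J \to B \to A \to 0$ factors, after choosing a $\kk$-basis of $J$, as a chain of \emph{principal} small extensions, i.e.\ ones with one-dimensional kernel. This reduces the whole construction to the principal case.

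\textbf{Construction.} I would let $V$ be the free $\kk$-vector space on the set of pairs $(e, a)$, where $e: 0 \to \kk \to B \to A \to 0$ is a principal small extension and $a \in F(A)$. Inside $V$ I take the subspace $W$ generated by: (i) pairs $(e, a)$ for which $a$ lifts to $F(B)$; (ii) elements of the form $(e_2, \alpha_A(a)) - \lambda \cdot (e_1, a)$ whenever $\alpha: e_1 \to e_2$ is a morphism of principal small extensions inducing multiplication by $\lambda \in \kk$ on the one-dimensional kernels. Set $O_F := V/W$, and for principal $e$ put $ob_e(a) := [(e,a)] \in O_F = O_F \otimes \kk$. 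For a general small extension $e$ with kernel $J$, fix a basis $v_1, \dots, v_n$ of $J$, factor $e$ into principal steps $e_i$ with kernels $\kk \cdot v_i$, and define $ob_e(a) := \sum_i ob_{e_i}(a_{i-1}) \otimes v_i \in O_F \otimes J$, where $a_{i-1}$ is the image of $a$ in $F(A_{i-1})$. Independence from the chosen basis and compatibility with morphisms of small extensions are verified by invoking (H1) to realize comparisons of different factorizations through auxiliary fiber product extensions, so that relation (ii) applies.

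\textbf{Universality, completeness, and the main difficulty.} Universality is close to tautological: given another obstruction theory $(V', v'_e)$, its own base-change axiom forces the assignment $[(e,a)] \mapsto v'_e(a)$ to kill both types of relations in $W$, hence to descend to a unique linear map $O_F \to V'$; the statement that every element of $O_F$ has the form $ob_e(a)$ for some principal $e$ is immediate from the generating set, and one can pack a finite sum of principal obstructions into a single extension via iterated fiber products over $A$. The hardest step will be completeness: showing that $ob_e(a) = 0$ forces $a$ to lift to $F(B)$. A formal vanishing $[(e,a)] = 0$ in $V/W$ only records that $(e,a)$ equals a finite combination of liftable and base-change generators in $V$, and the real work is to convert this algebraic identity into an actual element of $F(B)$. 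The idea is to glue the liftable data appearing in the combination into a single common small extension by iterated fiber products, apply (H1) repeatedly to obtain a compatible element there, and then push it down to $B$ along a suitable morphism of small extensions produced from the base-change relations. This bookkeeping, keeping track of the precise coefficients $\lambda$ appearing in relation (ii), is where the technical heart of the argument lies.
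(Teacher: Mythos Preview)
The paper does not supply its own proof of this theorem; it is quoted verbatim from \cite[Theorem 3.2]{FantechiManetti} and used as a black box. So there is nothing in the present paper to compare your argument against, and your sketch is essentially an outline of the original Fantechi--Manetti construction: build $O_F$ as the free vector space on principal obstruction data modulo the relations forced by lifting and base change, then verify the axioms.

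One point deserves tightening. When you argue that every element of $O_F$ can be written as a single $ob_e(a)$, you say this is ``immediate from the generating set'' and that one can ``pack a finite sum of principal obstructions into a single extension via iterated fiber products over $A$''. But a generic element of $O_F$ is a finite $\kk$-linear combination $\sum_i c_i\,[(e_i,a_i)]$ where the principal extensions $e_i: 0\to\kk\to B_i\to A_i\to 0$ have \emph{different} bases $A_i$, so there is no single $A$ to fiber over. The fix is to use the second condition in Definition~\ref{def. def functor} (that $\eta$ is an isomorphism when $A=\kk$): form $A=A_1\times_\kk\cdots\times_\kk A_n$, lift the tuple $(a_1,\dots,a_n)$ to a single $a\in F(A)$, form the extension $B_1\times_\kk\cdots\times_\kk B_n\to A$ with kernel $\kk^n$, and push out along the linear map $\kk^n\to\kk$, $(x_1,\dots,x_n)\mapsto\sum c_ix_i$, to obtain a principal extension $e$ of $A$. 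The base-change relation (ii) then yields $ob_e(a)=\sum_i c_i\,[(e_i,a_i)]$. This is the step where the deformation-functor axiom beyond (H1) is actually used, and your write-up should make that dependence explicit. With this adjustment your outline matches the argument in \cite{FantechiManetti}.
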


In the following, we will also need the notion of relative obstruction theory. 

\begin{definition}
Let $\varphi: F\to G$ be a morphism of functors of Artin rings and suppose
$G$ to be a deformation functor. A \emph{relative obstruction theory} $(V,v_e)$ for $\varphi$ is the data of 
\begin{itemize} 
\item $\kk$-vector space $V,$ called \emph{obstruction space},
\item for every small extension in $\Art_\kk$:
\[  e: 0\to J \to B \stackrel{\varphi}{\to} A \to 0\]
of an obstruction map $v_e : F(A) \times_{G(A)} G(B) \to V \otimes_\kk J$ satisfying the following properties:
\begin{enumerate}
\item If $(a, \beta) \in  F(A) \times_{G(A)} G(B)$ can be lifted to $F(B)$ then $v_e( a, \beta) =0$. 
\item (base change) For every morphism $\alpha: e_1\to e_2$ of small extensions, i.e. for every commutative diagram as (\ref{morph of small ex}), the following diagram is also commutative
\[ \xymatrix{  F(A_1) \times_{G(A_1)} G(B_1)  \ar[r]^-{v_{e_1}} \ar[d]^{(\alpha_A,\alpha_B)} & V\times_{\kk} J_1  \ar[d]^{Id_V \otimes \alpha_J}  \\
F(A_2) \times_{G(A_2)} G(B_2)  \ar[r]^-{v_{e_2}}  &  V\times_{\kk} J_2. } \]

A relative obstruction theory is called \emph{complete}  if the converse of
the first condition holds, i.e. the lifting exists if and only if the obstruction vanishes.
\end{enumerate}
\end{itemize}

\end{definition}

\begin{remark}
Note that, if $\varphi: F\to G$ is smooth then all the relative obstruction maps are trivial. The inverse holds if the relative obstruction theory is complete.
\end{remark}
 
\begin{theorem}\cite[Theorem 3.2]{FantechiManetti} \label{teo.univ rel ob}
Let $\varphi: F \to G$ be a morphism of deformation functors, then there exists a unique universal relative obstruction theory for $\varphi$. 
\end{theorem}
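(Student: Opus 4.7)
The plan is to mirror the proof of the absolute statement Theorem \ref{teo.univ ob} from \cite[Theorem 3.2]{FantechiManetti} in the relative setting, by constructing $O_{F/G}$ explicitly as a quotient of a free $\kk$-vector space and then verifying the defining properties. First I would restrict attention to principal small extensions $e: 0 \to (t) \to B \to A \to 0$ whose kernel is generated by a single element $t$. Consider the free $\kk$-vector space generated by symbols $\langle e, (a, \beta), t \rangle$ with $(a, \beta) \in F(A) \times_{G(A)} G(B)$, and take the quotient by the subspace spanned by the relations:
\begin{itemize}
\item[(R1)] $\langle e, (a, \beta), t \rangle \sim 0$ whenever $(a, \beta)$ lifts to some $\tilde a \in F(B)$;
\item[(R2)] $\langle e_2, (\alpha_A(a), \alpha_B(\beta)), t_2 \rangle \sim c \cdot \langle e_1, (a, \beta), t_1 \rangle$ whenever there is a morphism $\alpha: e_1 \to e_2$ of principal small extensions with $\alpha_J(t_1) = c\, t_2$.
\end{itemize}
Call $O_{F/G}$ the resulting quotient, and for a principal $e$ set $ob_e((a, \beta)) = \langle e, (a, \beta), t \rangle \otimes t \in O_{F/G} \otimes_\kk J$; by (R2) this is independent of the generator $t$.

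To pass to a general small extension with $\dim_\kk J > 1$, I would pick a basis of $J$, factor $e$ through the principal subextensions obtained by quotients by codimension-one subspaces of $J$, and define $ob_e((a, \beta))$ componentwise; independence of the basis follows again from (R2), together with the deformation functor axioms applied to compute the liftings along fibre products of the auxiliary subextensions. The lifting axiom is then built into (R1) by construction, while the base-change axiom is (R2). The universal property is then straightforward: given any other relative obstruction theory $(V, v_e)$, the assignment $\langle e, (a, \beta), t \rangle \otimes t \mapsto v_e((a, \beta))$ descends to a well-defined linear map $O_{F/G} \to V$, since by definition $v_e$ vanishes on lifts (so it is compatible with (R1)) and satisfies base change (so it is compatible with (R2)). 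Uniqueness of the map is automatic because the classes $[\langle e, (a, \beta), t \rangle] \otimes t$ generate $O_{F/G} \otimes J$.

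The main obstacle is the completeness step, which is needed to guarantee that the constructed theory is universal (via the standard argument that a universal obstruction theory is automatically complete; conversely, completeness is what makes the above construction nontrivial and forces $O_{F/G}$ to be the smallest). Completeness asserts that if $ob_e((a, \beta)) = 0$ in $O_{F/G} \otimes J$ then $(a, \beta)$ actually admits a lift to $F(B)$. Reducing to the principal case, one must show that if $\langle e, (a, \beta), t \rangle$ lies in the subspace of relations, then $(a, \beta)$ itself lifts. This is where the deformation functor structure of both $F$ and $G$, together with the fact that $\varphi$ is a morphism of deformation functors, enters in a decisive way: a careful inductive argument on the length of an expression of $\langle e, (a, \beta), t \rangle$ as a sum of generators of the relation subspace, combined with the Schlessinger-type conditions of Definition \ref{def. def functor} applied to fibre products of the intervening small extensions, allows one to glue together the several partial liftings produced by (R1) along base changes given by (R2) into a single coherent lift of $(a, \beta)$ over $B$.
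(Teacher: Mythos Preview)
The paper does not give its own proof of this statement: Theorem~\ref{teo.univ rel ob} is stated with a citation to \cite[Theorem~3.2]{FantechiManetti} and no argument follows it in the text (the same is true of the absolute version, Theorem~\ref{teo.univ ob}). So there is nothing in the paper to compare your proposal against.

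As for the proposal itself, the construction you outline is the standard one and is essentially what Fantechi--Manetti do. One remark: your treatment of completeness is where the real content lies, and your description there (``a careful inductive argument on the length of an expression \ldots\ allows one to glue together the several partial liftings'') is a placeholder rather than an argument. In the actual proof one does not induct on the length of a relation; rather, one shows directly that any two principal small extensions with the same base $A$ are related by a zig-zag of morphisms through their fibre product, and then uses the deformation-functor axioms (in particular the bijectivity of $\eta$ when $A=\kk$ and surjectivity otherwise) to transport liftability along such morphisms. If you want to turn your sketch into a proof, that is the step to make precise.
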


\begin{theorem}\cite[Proposition 2.17]{Man Pisa} \label{teo. liscio se tangete suriettivo e iniettivo ostruzione}
Let $\varphi: F \to G$ be a morphism
of deformation functors and $\varphi': (V, v_e) \to (W, we) $  be a compatible morphism between obstruction theories. If $(V, v_e)$ is complete, $\varphi': V \to W$ is injective and $t_\varphi: t_F\to t_G$ is surjective then $\varphi$ is smooth.
\end{theorem}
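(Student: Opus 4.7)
The plan is to reduce to small extensions, use completeness of $(V,v_e)$ together with the injectivity of $\varphi'$ to produce some lift of $a$ to $F(B)$, and then correct this lift with the help of the surjectivity of $t_\varphi$ and the natural tangent-space action on the fibers of $F(B)\to F(A)$ and $G(B)\to G(A)$.

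First I would reduce to small extensions. Any surjection $B\to A$ in $\Art_\kk$ factors as a finite composition of small extensions, and the lifting condition in Definition \ref{def.smoothness} is preserved under composition by a short diagram chase: given a factorization $B\to A_1\to A$ and $(\beta,a)\in G(B)\times_{G(A)}F(A)$, one first lifts $(\beta|_{A_1},a)$ to $a_1\in F(A_1)$ via $A_1\to A$ and then lifts $(\beta,a_1)$ to $F(B)$ via $B\to A_1$. Hence it is enough to show that, for every small extension $e:0\to J\to B\to A\to 0$ and every $(\beta,a)\in G(B)\times_{G(A)}F(A)$, there exists $\tilde a\in F(B)$ lifting $a$ and satisfying $\varphi(\tilde a)=\beta$.

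The obstruction step is where the two hypotheses on $\varphi'$ enter. Since $\beta\in G(B)$ lifts $\varphi(a)\in G(A)$, the first axiom of the obstruction theory $(W,w_e)$ gives $w_e(\varphi(a))=0$. Compatibility of the morphism of obstruction theories yields $\varphi'(v_e(a))=w_e(\varphi(a))=0$, and the injectivity of $\varphi'$ then forces $v_e(a)=0$. By completeness of $(V,v_e)$ we obtain some $a'\in F(B)$ lifting $a$. The correction step goes as follows. Both $\varphi(a')$ and $\beta$ sit in the fiber of $G(B)\to G(A)$ over $\varphi(a)$, so they differ by a tangent vector. More precisely, because $F$ and $G$ are deformation functors, the homogeneity condition applied to $B\times_A(A\oplus J)\cong B\oplus J$ equips the non-empty fibers of $F(B)\to F(A)$ and of $G(B)\to G(A)$ with natural transitive actions of $t_F\otimes J$ and $t_G\otimes J$ respectively, and the morphism $\varphi$ intertwines them via $t_\varphi\otimes\mathrm{id}_J$. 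Therefore there exists $\gamma\in t_G\otimes J$ with $\gamma\cdot\varphi(a')=\beta$; surjectivity of $t_\varphi$, hence of $t_\varphi\otimes\mathrm{id}_J$, provides $\eta\in t_F\otimes J$ with $(t_\varphi\otimes\mathrm{id}_J)(\eta)=\gamma$. Then $\tilde a:=\eta\cdot a'\in F(B)$ still lifts $a$, and by naturality $\varphi(\tilde a)=\gamma\cdot\varphi(a')=\beta$, which produces the required preimage.

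The main obstacle I expect is the clean verification of the last step, namely that $t_F\otimes J$ acts transitively on the non-empty fibers of $F(B)\to F(A)$ and that this action is natural under $\varphi$. This is precisely the part of the argument that draws on the full force of the deformation-functor axioms (the strengthened ($H_2$) of Definition \ref{def. def functor}), and must be extracted from the fibered-product isomorphism above; once it is in place, the rest of the proof is a direct assembly of the three ingredients completeness, injectivity of $\varphi'$, and surjectivity of $t_\varphi$.
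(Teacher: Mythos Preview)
The paper does not give its own proof of this statement: it is simply quoted from \cite[Proposition 2.17]{Man Pisa}. Your argument is the standard one and is correct. The reduction to small extensions, the use of compatibility plus injectivity of $\varphi'$ to obtain $v_e(a)=0$, and the invocation of completeness to produce a preliminary lift $a'$ are all exactly as in Manetti's proof. The correction step via the transitive $t_F\otimes J$-action on the fibers of $F(B)\to F(A)$ is also the standard device; the cleanest way to set it up is through the ring isomorphism $B\times_A B\cong B\times_\kk(\kk\oplus J)$, which combined with the second axiom of Definition~\ref{def. def functor} (isomorphism when the base is $\kk$) yields $F(B)\times (t_F\otimes J)\cong F(B\times_A B)$, and then the first axiom (surjectivity of $\eta$) gives the surjection onto $F(B)\times_{F(A)}F(B)$, i.e.\ transitivity. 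Naturality in $\varphi$ is immediate from functoriality of these identifications. Your flagged ``main obstacle'' is thus a genuine but routine verification, and once carried out the proof is complete.
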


 \bigskip 
\subsection{Differential graded Lie algebras and deformation functors}

\begin{definition}
A \emph{differential graded Lie algebra}, briefly a \emph{dgLa}, is the data $(L,d,[\ ,\ ])$, where $L=\bigoplus_{i\in \mathbb {Z}} L^i$ is a $\mathbb  Z$-graded vector space over $\kk$, $d:L^i \rightarrow L^{i+1}$ is a linear map, such that $d \circ d=0$, and $[\ ,\ ]:L^i \times L^j \rightarrow L^{i+j}$ is a bilinear map, such that:
\begin{enumerate}
\item[-] $[\ ,\ ]$ is graded skewsymmetric, i.e. $[a,b]=-(-1)^{\deg a\deg b}[b,a]$,  
\item[-] $[\ ,\ ]$ verifies the graded Jacoby identity, i.e. $[a,[b,c]]=[[a,b],c]+(-1)^{\deg a\deg b}[b,[a,c]]$,
\item[-] $[\ ,\ ]$ and $d$ verify the graded Leibniz's rule, i.e. $d[a,b]=[da,b]+(-1)^{\deg a}[a,db]$, 
\end{enumerate}
for every $a, b$ and $c$ homogeneous.
\end{definition}

\begin{definition}
Let $(L,d_L, [\ ,\ ]_L)$ and $(M, d_M, [\ ,\ ]_M)$ be two dgLas, a \emph{morphism of dgLas}  $\varphi:L\to M$ is a degree zero linear morphism that commutes with the brackets and the differentials.  

A \emph{quasi-isomorphism} of dgLas is a morphism of dgLas that induces an isomorphism in cohomology. 
\end{definition}

Let $L$ be a differential graded Lie algebra, then there is a deformation functor $\Def_L: \Art_{\kk}\to \Set$ canonically associated to it, as follows
\begin{definition}
For all $(A,\mathfrak{m}_A)\in \Art_{\kk}$, we define:
$$\Def_L(A)=\frac{\MC_L(A)}{\sim_{\textrm{gauge}}},$$
where:
$$\MC_L(A)=\left\{x\in L^1\otimes \mathfrak{m}_A \mid dx+\frac{1}{2}[x,x]=0\right\}$$
and the gauge action is the action of $\exp (L^0\otimes \mathfrak{m}_A)$ on $\MC_L(A)$, given by:
$$e^a * x= x+\sum_{n=0}^{+\infty} \frac{([a,-])^n}{(n+1)!}([a,x]-da).$$
\end{definition} 
We recall that the tangent to the deformation functor $\Def_L$ is the first cohomology space $H^1(L)$ of the dgLa $L$. Moreover, a complete obstruction theory for the functor $\Def_L$ can be naturally defined and its obstruction space is the second cohomology space $H^2(L)$ of the dgLa $L$. 

If the functor of deformations of a geometrical object $\mathcal{X}$ is isomorphic to the deformation functor associated to a dgLa $L$, then we say that $L$ controls the deformations of $\mathcal{X}$. 

By definition, any morphism $\varphi: L \to M$, induces a morphism $\varphi: \Def_L \to \Def_M$, that is an isomorphism whenever $\varphi$ is a quasi-isomorphism.

\section {Deformation of locally free sheaves}

Let $X$ be a smooth projective variety of dimension $n$ and  $E$  a locally free sheaf of $\mathcal{O}_X$-modules on $X$.  First of all we recall some notions about the deformations of the locally free sheaf $E$.

\begin{definition}
Let $A$ be a local Artinian $\kk$-algebra with residue field $\kk$.
An \emph{infinitesimal deformation} of $E$ over $A$ is a locally free sheaf $E_A$ of $\OO_X \otimes A$-modules over $X\times \Spec A$, with a morphism $\pi_A: E_A \to E$ such that the obvious restriction of scalars $\pi_A: E_A \otimes_A \kk \to E$ is an isomorphism. The deformation will be indicated with $(E_A, \pi_A)$ or, shortly, with $E_A$.

Two of such deformations $E_A$ and $E'_A$ are \emph{isomorphic} if there exists an isomorphism $\phi$ of sheaves of $\OO_X \otimes A$-modules that makes the following diagram commutative:
\begin{equation} \label{def(E)}
\xymatrix{ E_A \ar[rr]^{\phi} \ar[dr]_{\pi_A}  & & E'_A  \ar[dl]^{\pi'_A} \\
&E.&  }\end{equation}

The \emph{functor} of infinitesimal deformations of $E$ is
\[ \Def_E: \Art_\kk \to \Set.\] 
\end{definition}

It is classically known that $\Def_E$ is a deformation functor. Moreover its tangent space is $t_{\Def_E}=\Def_E(\kk[\epsilon])= H^1(X,\End (E))$ and the obstructions are contained in $H^2(X, \End (E))$, see for example \cite{Sernesi}.

\medskip

 Let $E$ be a locally free sheaf of $\OO_X$-modules on $X$ and let $\End E$ be the locally free sheaf of its endomorphisms.
Over the ground field $\CC$, consider 
\[ A_X^{0,*}(\End E) := \bigoplus_{i=0}^n A_X^{0,i}(\End E):= \bigoplus_{i=0}^n \Gamma\left(X, {\mathcal A}_X^{0,i}(\End E)\right), \] 
the graded vector space of global sections of the sheaf of differential forms with values on the sheaf $\End E$. 
The Dolbeault differential on forms and the bracket defined as the wedge product on forms and the composition of endomorphisms induce a structure of dgLa on it.  
It is well known that this dgLa is the one that controls the deformations of $E$.
\begin{proposition}\cite [Theorem 1.1.1]{Fuk} \label{dgla che governa Def E}
The dgLa $A_X^{0,*}(\End E)$ controls deformations of $E$. 
The isomorphism of functors is given, for all $A\in \Art_{\CC}$, by 
$$\begin{array}{rrll}
\Psi: & \Def_{A_X^{0,*}(\End E)}(A)& \longrightarrow &\Def_{E}(A) \\
&x& \longrightarrow & \ker(\deltabar+x)
\end{array}$$
\end{proposition}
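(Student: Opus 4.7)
The plan is to verify that $\Psi$ is a well-defined natural isomorphism of functors from $\Def_{A_X^{0,*}(\End E)}$ to $\Def_E$. The geometric intuition behind the map is that after choosing a $C^\infty$-trivialisation of the underlying smooth family, any infinitesimal deformation of the holomorphic bundle $E$ is completely encoded in the perturbation $x$ of its Dolbeault operator $\deltabar$, and the integrability condition on this perturbation is precisely the Maurer--Cartan equation.

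For well-definedness at the level of Maurer--Cartan elements, given $x \in A_X^{0,1}(\End E) \otimes \mathfrak{m}_A$ I would check that $\deltabar + x$ extends $A$-linearly to $\Gamma(X,\mathcal A_X^{0,*}(E))\otimes A$ and satisfies $(\deltabar + x)^2 = \deltabar x + \tfrac{1}{2}[x,x]$ by the graded Leibniz rule applied to the composition of an odd operator with itself. This vanishes precisely by MC, so $\deltabar + x$ defines an integrable $\OO_X\otimes A$-linear Dolbeault-type operator. Its kernel in $\mathcal A^{0,0}_X(E)\otimes A$ is locally free of the expected rank (formal solvability order-by-order in the powers of $\mathfrak{m}_A$ suffices, since $A$ is Artinian), and modding out by $\mathfrak{m}_A$ recovers $\ker \deltabar = E$, furnishing the structure map $\pi_A$ needed for the definition of $\Def_E(A)$.

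For compatibility with the equivalence relations, I would use the standard conjugation identity: if $y = e^a \ast x$ with $a \in A_X^{0,0}(\End E) \otimes \mathfrak{m}_A$, then $e^a \circ (\deltabar + x) = (\deltabar + y) \circ e^a$, and since $e^a$ reduces to $\Id_E$ modulo $\mathfrak{m}_A$ it restricts to an isomorphism of $\OO_X\otimes A$-modules between the two kernel sheaves, compatible with $\pi_A$ and $\pi_A'$; thus $\Psi$ descends to gauge classes. Running this argument in reverse yields injectivity: any isomorphism of deformations lifting $\Id_E$, viewed after a common $C^\infty$-trivialisation, is an automorphism of $\mathcal A^{0,0}_X(E) \otimes A$ reducing to the identity, hence of the form $e^a$ for some $a \in A_X^{0,0}(\End E)\otimes \mathfrak{m}_A$, and the intertwining identity shows it realises a gauge equivalence between the corresponding Maurer--Cartan elements.

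The main obstacle is surjectivity, and it rests on the $C^\infty$-triviality of infinitesimal families. I would prove by induction on small extensions $0\to J \to B \to A \to 0$ that any deformation $E_A$ admits a $C^\infty$-isomorphism between $E_A\otimes \mathcal A^{0,0}_X$ and $\mathcal A^{0,0}_X(E) \otimes A$ lifting the identity on $E$: the obstruction to such a lift lives in $H^1(X, \mathcal A^{0,0}_X(\End E)) \otimes J$, which vanishes because $\mathcal A^{0,0}_X(\End E)$ is a fine sheaf, and the same vanishing controls the ambiguity of the lift. Transporting the holomorphic structure of $E_A$ through this trivialisation produces an operator of the form $\deltabar + x$ with $x \in A_X^{0,1}(\End E) \otimes \mathfrak{m}_A$, and the identity $(\deltabar+x)^2 = 0$ (inherited from the integrability of $E_A$) is exactly the Maurer--Cartan condition on $x$. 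Different trivialisations differ by $e^a$ as above, i.e.\ by a gauge, so the resulting class $[x] \in \Def_{A_X^{0,*}(\End E)}(A)$ is canonical and maps to $[E_A]$ under $\Psi$. Functoriality and naturality in $A$ follow by inspection, completing the isomorphism of functors.
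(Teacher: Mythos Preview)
Your argument is correct and is the standard one. Note, however, that the paper does not supply its own proof of this proposition: it is stated as a citation to \cite[Theorem~1.1.1]{Fuk}, and in the proof of Proposition~\ref{dgla D controlla def (E,U)} the authors explicitly ``leave to the reader the classically known calculations for the isomorphism of the functors of Proposition~\ref{dgla che governa Def E}''. So there is no in-paper proof to compare against.

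That said, the ingredients you use are precisely those the paper later deploys in the $(E,U)$ case: the identity $(\deltabar+x)^2 = dx + \tfrac12[x,x]$, the conjugation formula $\deltabar+y = e^{z}\circ(\deltabar+x)\circ e^{-z}$ expressing gauge equivalence, the observation that any isomorphism of deformations reducing to the identity has the form $e^z$ with $z\in A_X^{0,0}(\End E)\otimes\mathfrak m_A$, and smoothness established by comparing obstruction theories along small extensions. Your surjectivity step via $C^\infty$-triviality (vanishing of $H^1$ of the fine sheaf $\mathcal A_X^{0,0}(\End E)$) is the expected mechanism and is what underlies the ``classically known'' calculations the paper invokes.
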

In particular,  the tangent space to $\Def_{E}$ is $\Def_{E}(\CC[\epsilon])=H^1(X, \End E)$ and the obstructions to deformations are contained in $H^2(X, \End E)$,  that fits in the classical picture.

\begin{remark} \label{rmk.Thom-Whitney}
If the ground field is any algebraically closed field of characteristic zero, instead of Dolbeault forms, the associated dgLa is defined via the Thom-Whitney complex associated with the sheaf of endomorphisms $\End E$ (see \cite{FMM}). 
\end{remark}

\section {Deformation of locally free sheaves with a fixed subspace of sections} \label{sect.def(E,U)}

Let $E$ be a locally free sheaf of $O_X$-modules on a smooth projective variety $X$ and fix a subspace $U \subseteq H^0(X,E)$.
In this section, we study infinitesimal deformations of $E$ which preserves the subspace $U$.
We point out that in the literature such a pair $(E,U)$ is called a \emph{local system} of type $(n=\rk E,d=\deg E, k=\dim U)$. Deformations of local systems, stability conditions for them and the concerned moduli space are studied in \cite{lepoit, king,bgmm03, he}. 

\smallskip

We start with some definitions and results of \cite{elenatesi, ElenaFormal}.

\begin{definition} \label{def.(E,U)}
Let $A$ be a local Artinian $\kk$-algebra with residue field $k$. An \emph{infinitesimal deformation} of the pair $(E, U)$  over $A$  is the data $(E_A,\pi_A, i_A)$ of:
\begin{itemize}
\item a deformation $(E_A, \pi_A)$ of $E$ over $A$,
\item a morphism $i_A:U\otimes A \rightarrow H^0(E_A)$,
\end{itemize}
such that the following diagram commutes
\begin{equation}  \label{def(E,U)}
\xymatrix{ U\otimes A \ar[d]^{\pi} \ar[r]^{i_A} & H^0(E_A) \ar[d]^{\pi_A}\\
U \ar@{^{(}->}[r]^{i} & H^0(E).   }
\end{equation}

Two of such deformations  $(E_A,\pi_A,i_A)$, $(E'_A,\pi'_A,i'_A)$ are \emph{isomorphic} if there exist an isomorphism $\phi:E_A \rightarrow E'_{A}$ of sheaves of $\OO_X \otimes A$-modules,  such that
 $\pi'_A \circ \phi=\pi_A$ as in diagram \eqref{def(E)},
and an isomorphism $\psi:U\otimes A \to U\otimes A$,  that makes the diagram commutative:
\begin{equation}  \label{isodef(E,U)}
\xymatrix{ U\otimes A \ar[d]^{\psi}   \ar[r]^{i_A}& H^0(E_A) \ar[d]^{\phi} \\
             U\otimes A \ar[r]^{i'_A}& H^0(E'_A) . }
\end{equation}

Note that, this implies that $\phi$ induces an isomorphism  $\phi: i_A(U\otimes A) \rightarrow i'_A(U\otimes A)$. 

The \emph{functor} of infinitesimal deformations of $(E,U)$ is
\[ \Def_{(E,U)}: \Art_\kk \to \Set,\]
that associates with every $A\in \Art_\kk$ a deformation $(E_A,\pi_A, i_A)$ as defined above. In the following, we will often shorten the notation of such a deformation with $(E_A, i_A)$. 
\end{definition}

\begin{proposition}
The functor $\Def_{(E,U)}: \Art_\kk \to \Set$ defined above is a deformation functor.
\end{proposition}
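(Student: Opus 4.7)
My plan is to leverage the fact, recalled in the previous section, that $\Def_E$ is a deformation functor, and to extend the verification to cover the section data. Two observations make the extension routine: (i) since $U$ is a $\kk$-vector space, hence free (thus flat) over $\kk$, the functor $U\otimes_\kk -$ commutes with the finite limits appearing in Definition \ref{def. def functor}; and (ii) $H^0(X,-)$ commutes with limits of sheaves (being built from kernels and products, which are computed pointwise on sheaves of modules).

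To check the first axiom, let $B\to A$ be surjective in $\Art_\kk$ and pick an element
$\bigl((E_B,i_B),(E_C,i_C)\bigr)$ of $\Def_{(E,U)}(B)\times_{\Def_{(E,U)}(A)}\Def_{(E,U)}(C)$. After choosing representatives so that the restrictions of $E_B$ and $E_C$ to $A$ are identified by a fixed isomorphism with a common deformation $E_A$, the fact that $\Def_E$ is a deformation functor produces $E_{B\times_A C}:=E_B\times_{E_A}E_C$, fitting in a cartesian square of sheaves of $\OO_X\otimes(B\times_A C)$-modules. Applying $H^0$ yields
$$H^0(E_{B\times_A C})\;\cong\; H^0(E_B)\times_{H^0(E_A)}H^0(E_C),$$
while tensoring the defining square of $B\times_A C$ by $U$ gives
$$U\otimes(B\times_A C)\;\cong\;(U\otimes B)\times_{U\otimes A}(U\otimes C).$$
Because $i_B$ and $i_C$ agree over $A$ by construction, the universal property of fiber products furnishes a unique morphism $i_{B\times_A C}\colon U\otimes(B\times_A C)\to H^0(E_{B\times_A C})$, and the compatibility diagram \eqref{def(E,U)} commutes because it commutes after pulling back to $B$ and to $C$. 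This gives surjectivity of $\eta$.

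For the second axiom, take $A=\kk$, so the target of $\eta$ is just $\Def_{(E,U)}(B)\times\Def_{(E,U)}(C)$. The construction above already provides surjectivity. For injectivity, given two pairs over $B\times_\kk C$ whose restrictions to $B$ and to $C$ are isomorphic (as in \eqref{isodef(E,U)}), one glues the two isomorphisms $\phi_B,\phi_C$ of $\OO_X\otimes(\cdot)$-modules—using again that $\Hom$ into a fiber product is the fiber product of the $\Hom$'s—into a single isomorphism $\phi$ over $B\times_\kk C$, and similarly one glues the automorphisms $\psi_B,\psi_C$ of $U\otimes(\cdot)$ into $\psi$; the fact that $\psi$ is compatible with $i$ and $i'$ follows from the universal property.

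I expect the only genuine delicate point to be bookkeeping at the level of isomorphism classes: to apply the universal property one needs to pick representatives so that the identifications over $A$ match on both sides, and to verify that the resulting class does not depend on these choices. This is the same mild subtlety that appears in verifying Schlessinger's axioms for any pair functor, and it is resolved by the standard argument of twisting by automorphisms rather than by a deeper technical input.
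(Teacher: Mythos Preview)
Your proof is correct and follows essentially the same approach as the paper: both construct the lift as the fiber product $E_B\times_{E_A}E_C$ together with the map $i_B\times i_C$, and both handle injectivity when $A=\kk$ by arguing that the data over $B\times_\kk C$ is determined by its restrictions to $B$ and $C$. Your write-up is slightly more explicit about the categorical justifications (flatness of $U$ over $\kk$, left-exactness of $H^0$), whereas the paper leaves these implicit and cites Schlessinger for the sheaf part.
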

\begin{proof}
First observe that $\Def_{(E,U)}(\kk)=\{(E,i)\}$, where $i:U \to H^0(E)$ is the inclusion and so $\Def_{(E,U)}$ is a functor of Artin rings.  

To prove it is a deformation functor, we verify the two conditions of Definition \ref{def. def functor}.
\begin{itemize}
\item Let $B \rightarrow A$ and $C \rightarrow A$ two morphisms of Artin rings, suppose the first one to be surjective, we have to prove that
\[ \eta: \Def_{(E,U)}(B \times_A C) \rightarrow \Def_{(E,U)}(B) \times_{\Def_{(E,U)}(A)} \Def_{(E,U)}(C)  \] is surjective.
Let $((E_B,i_B), (E_C,i_C)) \in \Def_{(E,U)}(B) \times_{\Def_{(E,U)}(A)} \Def_{(E,U)}(C)$ and let $(E_A, i_A)$ be the deformation over $A$ to which both reduce. It is classically known (see for example \cite[Prop.3.2]{Schlessinger} for the line bundle case), that $\widetilde{E}:= E_B \times_{E_A} E_C$ is a locally free sheaf of ${\mathcal O}_{B\times_A C}$-modules that deforms $E$ and which reduces to $E_B$ and $E_C$ over $B$ and $C$, respectively. By hypothesis,  $i_B \otimes_B \Id_A = i_C \otimes_C \Id_A = i_A: U \otimes A \to H^0(E_A)$, that means that $U$ is a subspace of sections of $E$ that lift to $E_B$ and $E_C$. Thus, there exists $\tilde{i} :=i_B \times i_C: U \otimes (B\times_A C) \to H^0({\widetilde E})$ and $({\widetilde E}, \tilde{i}) \in \Def_{(E,U)}(B \times_A C)$ proves the surjectivity of $\eta$. 
\item Let now $A=\kk$, we have to prove that $\eta$ is bijective. The surjectivity is done. Suppose now that $(\hat{E}, \hat{i}) \in \Def_{(E,U)}(B\times_\kk C)$ is an other deformation of $(E,U)$ sent to  $((E_B,i_B), (E_C,i_C)) $ under $\eta$. Since $\hat{E}$ and $\widetilde{E}$ both reduce to $E_B$, $E_C$, $E$ over $B$, $C$ and $k$ respectively, it is classically known  (see \cite[Prop.3.2]{Schlessinger}), that they are isomorphic. Note now that $\hat{i}: U \otimes (B\times_\kk C) \to H^0(\hat{E})$ is completely determined by its reductions over $B$ and $C$, that are respectively $\hat{i}\otimes_{B\times_\kk C} B=i_B$ and $\hat{i}\otimes_{B\times_\kk C} C=i_C$. Thus $\hat{i}$ and $\widetilde{i}$ have to coincide. 
\end{itemize}

\end{proof}
\medskip

There is a natural transformation of functors
\[ \Def_{(E,U)} \to \Def_E,\]
that associates with every deformation of the pair $(E, U)$ over $A\in \Art_\kk$, the deformation of the sheaf $E$ over $A$ forgetting the deformed space of sections.  
\begin{lemma} \label{rel obst def functors}
The relative obstruction theory of the natural transformation $\Def_{(E,U)} \to \Def_E$ is contained in $\Hom(U, H^1(X,E))$. 
\end{lemma}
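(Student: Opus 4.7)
The plan is to construct an explicit relative obstruction theory for the natural transformation $\Def_{(E,U)} \to \Def_E$ taking values in $\Hom(U, H^1(X,E))$; by Theorem \ref{teo.univ rel ob}, the universal relative obstruction theory then admits a morphism into this one, giving the containment asserted in the statement. Since $\Def_E$ is a deformation functor, the hypotheses of the universal relative obstruction theorem are in force.

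The construction goes as follows. Fix a small extension $e : 0 \to J \to B \to A \to 0$ in $\Art_\kk$ and a compatible pair $((E_A, i_A), E_B) \in \Def_{(E,U)}(A) \times_{\Def_E(A)} \Def_E(B)$. Because $E_B$ is locally free, hence flat, over $\OO_X \otimes B$, and because $J$ is annihilated by $m_B$, tensoring the small extension by $E_B$ over $B$ yields a short exact sequence of sheaves on $X$,
$$0 \to E \otimes_\kk J \to E_B \to E_A \to 0,$$
where one identifies $J \otimes_B E_B \cong J \otimes_\kk E$ using that $m_B$ acts trivially on $J$. Taking global sections produces a connecting homomorphism $\delta : H^0(X, E_A) \to H^1(X, E) \otimes_\kk J$, and I define
$$v_e((E_A, i_A), E_B)(u) := \delta(i_A(u \otimes 1_A)), \qquad u \in U.$$
This is a $\kk$-linear map from $U$ to $H^1(X,E) \otimes J$, and since $U \subseteq H^0(X, E)$ is finite-dimensional (as $X$ is projective) we may regard it as an element of $\Hom(U, H^1(X,E)) \otimes J$.

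Next I verify the two axioms of a relative obstruction theory. If the pair lifts to some $(E_B, i_B) \in \Def_{(E,U)}(B)$, then $i_B(u \otimes 1_B) \in H^0(X, E_B)$ restricts to $i_A(u \otimes 1_A)$, so by exactness $\delta(i_A(u \otimes 1_A)) = 0$ and the obstruction vanishes. For base change, a morphism $\alpha : e_1 \to e_2$ of small extensions induces a morphism of the associated short exact sequences of sheaves on $X$, and naturality of the connecting homomorphism in cohomology gives the required commutativity; the compatibility on the $U$-side is automatic from $i_{A_2} = \alpha_A \circ i_{A_1}$. As a bonus, this obstruction theory is in fact \emph{complete}: if $v_e = 0$, pick a $\kk$-basis $u_1, \dots, u_k$ of $U$, lift each $i_A(u_j \otimes 1_A)$ to some $s_j \in H^0(X, E_B)$ by exactness, and define $i_B : U \otimes B \to H^0(X, E_B)$ by $B$-linear extension of $u_j \mapsto s_j$, producing a lift.

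The only genuinely delicate step is the flat base-change identification $J \otimes_B E_B \cong J \otimes_\kk E$ that produces the short exact sequence above; everything else is the familiar naturality of the connecting homomorphism and a free choice of lifts of a finite basis. Once the obstruction theory is set up, the lemma follows at once from the universal property of Theorem \ref{teo.univ rel ob}.
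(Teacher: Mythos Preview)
Your proposal is correct and follows essentially the same approach as the paper: both use the short exact sequence $0 \to E \otimes J \to E_B \to E_A \to 0$ arising from flatness of $E_B$, take the connecting homomorphism $\delta: H^0(E_A) \to H^1(E) \otimes J$, and define the obstruction by composing with $i_A$. Your version is simply more detailed in verifying the axioms and completeness, and in invoking Theorem~\ref{teo.univ rel ob} to phrase the conclusion as a containment.
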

\begin{proof}
Let $0 \to J \to B \to A \to 0$ be a small extension. 
Let  
\[ \left( (E_A, i_A), E_B \right) \in \Def_{(E,U)}(A) \times_{\Def_{E}(A)} \Def_{E}(B), \]
thus $E_A$ is a deformation of $E$ over $A$ that lifts to a deformation $E_B$ over $B$. 
Consider the exact sequence
 \[   0 \to E \otimes J \to E_B \to E_A \to 0, \]
that induces the exact sequence in cohomology
\[ 0 \to H^0(E) \otimes J \to H^0(E_B) \to H^0(E_A) \stackrel{\delta}{\to} H^1(E) \otimes J \to \ldots. \]
Note that a section $s \in H^0(E_A)$ lifts to a section of $E_B$ if and only if its image under the boundary map $\delta$ in $H^1(X,E)\otimes J$ is zero. 
Thus, the obstructions of $\Def_{(E,U)}$ relative to $\Def_E$ are contained in $\Hom(U, H^1(X,E)) \otimes J$ and the obstruction theory is complete.
 \end{proof}

\medskip
From now on, the base field will be $\CC$. 
Consider the complex of sheaves of differential forms on $X$ with values in the sheaf $E$ with the Dolbeault differential 
\[ 0 \to  \mathcal{A}_X^{0,0}(E) \stackrel{\deltabar}{\to}  \mathcal{A}_X^{0,1}(E) \stackrel{\deltabar}{\to}  \mathcal{A}_X^{0,2}(E) \stackrel{\deltabar}{\to} \ldots \]
and the sheaf ${\mathcal H}om( \mathcal{A}_X^{0,*}(E), \mathcal{A}_X^{0,*}(E))$ of homorphisms of this complex. 
Note that the graded vector space of global sections of the sheaf ${\mathcal H}om( \mathcal{A}_X^{0,*}(E), \mathcal{A}_X^{0,*}(E))$ is the same as the graded vector space of homomorphisms of the complex of global sections
\[ 0 \to   A_X^{0,0}(E) \stackrel{\deltabar}{\to}  A_X^{0,1}(E) \stackrel{\deltabar}{\to}  A_X^{0,2}(E) \stackrel{\deltabar}{\to} \ldots \]
We denote it as $\Hom^*(A_X^{0,*}(E),A_X^{0,*}(E))$. 

As always, when one considers the homomorphism of a complex, one can endow $\Hom^*(A_X^{0,*}(E),A_X^{0,*}(E))$  with an obvious structure of dgLa using as bracket the wedge product on forms and the composition of homomorphism and as differential 
the bracket with the differential of the complex.  
The dgLa $\Hom^*(A_X^{0,*}(E),A_X^{0,*}(E))$ controls the deformation of the complex $(A_X^{0,*}(E), \deltabar)$, as proved in \cite[Section 4]{ManettiSemireg}.

Note that there exists an inclusion of dgLas  
\begin{equation}  \label{morfismo da end in hom(h0,h1)}\phi: A_X^{0,*}(\End E)  \to \Hom^*(A_X^{0,*}(E),A_X^{0,*}(E)), \end{equation}
defined for  $\omega \cdot f \in A_X^{0,p}(\End E) $ and $\eta \cdot s \in A_X^{0,q}(E)$ as
\[\phi ( \omega \cdot f)(\eta \cdot s)= \omega \wedge \eta \cdot f(s) \in A_X^{0,p+q}(E).
\]
It is easy to see that the elements in $A_X^{0,*}(\End E)$ correspond to the morphism of the complex $A_X^{0,*}(E)$ that are  $A_X^{0,*}$-linear. Moreover, the Maurer-Cartan elements of $A_X^{0,*}(\End E)$ which are equivalent to zero in $\Hom^*(A_X^{0,*}(E),A_X^{0,*}(E))$ under the inclusion  $\phi$ correspond to the deformations of $E$ that preserve the dimension of the cohomology spaces $H^i(X,E)$ for every index $i$, as proved in  \cite[Lemma 4.1]{ManettiSemireg}.

\medskip

Next, consider  the complex 
 \[Q_U: 0\to U  \stackrel{i}{\hookrightarrow}  A^{0,0}_X(E)  \stackrel{\deltabar}{\to}   A^{0,1}_X(E)  \stackrel{\deltabar}{\to}  A_X^{0,2}(E) \stackrel{\deltabar}{\to} \ldots,  \]
where $U$ is in degree -1.
We define  the graded vector space
\[ D_U = \left\{ f\in \Hom^*(Q_U,Q_U) \mid f|_{A^{0,*}_X(E) } \in  {A}^{0,\ast}_X(\End E)  \right\}.   \]
For any element $f \in D_U^j$, we use the notation as pair $f=(f_{-1}, f_i)$, where $f_{-1}:U \to A^{0,j-1}_X(E)$ and $f_i \in  A^{0,j}_X(\End E) $.
Endowed with the same differential and bracket as  $\Hom^*(Q_U,Q_U)$, $D_U$ is a dgLa. 
In particular,  the tangent space to $\Def_{D_U}$ is $ \Def_{D_U}(\CC[\epsilon])=H^1(D_U)$ and the obstructions to deformations are contained in $H^2(D_U)$.

Consider the morphism:
\[r: D_U\to  A^{0,\ast}_X(\End E), \] 
that associates with any $f=(f_{-1}, f_i)\in D_U$, the element $f_i \in  A^{0,*}_X(\End E)$. By definition, it is a morphism of dgLas and it is clearly surjective.
Denoting by $M^*=\ker r= \{ f \in D_U\ \mid \  f|_{ A^{0,*}_X(E)}=0 \}$,   we have the following  short exact sequence of dgLas
\[
0 \to M^* \to D_U \to A^{0,\ast}_X(\End E) \to 0,
\]
that induces the following exact sequence in cohomolgy
\begin{equation}
\begin{split} \label{successione lunga comologia DU}
0 \to H^0(M^*) \to H^0(D_U) \to H^0(A^{0,\ast}_X(\End E)) \to \\
 \to H^1(M^*) \to H^1(D_U) \to H^1(A^{0,\ast}_X(\End E)) \to \\
  \to H^2(M^*) \to H^2(D_U) \to H^2(A^{0,\ast}_X(\End E)) \to \ldots \\
 \end{split}
\end{equation}

Since $ A^{0,\ast}_X(\End E)$ is the Dolbeault resolutions of the sheaf $\End E$, there are isomorphisms  $H^j(A^{0,\ast}_X(\End E)) \cong H^j(X,  \End E)$, for all $j \geq 0$. 
Note that as dg vector space $M^*$ is isomorphic to $\Hom^*(U, Q_U)$, where $U$ is considered as a dg-vector space concentrated in degree -1, thus  $H^0(M^*)\cong \Hom (U, H^{-1}(Q_U))=0$,   $H^1(M^*)\cong \Hom (U, H^0(X,E)/U )$ and $H^j(M^*)\cong \Hom (U, H^{j-1}(X,E))$, for $j \geq 2 $.

Therefore the long exact sequence \eqref{successione lunga comologia DU} becomes 
 
\begin{equation} 
\begin{split} \label{seconda successione lunga comologia DU}
0 \to H^0(D_U) \to H^0( X,\End E) \to \\
 \to \Hom (U, H^0(X,E)/U  ) \to H^1(D_U) \to H^1(X,\End E)   \stackrel{\alpha}{\to}  \\
  \to \Hom (U, H^{1}(X,E))  \stackrel{\beta}{\to}  H^2(D_U)  \stackrel{\gamma}{\to}  H^2(X,\End E) \to \cdots \\
 \end{split}
\end{equation}
where the map $\alpha$ is  the restriction to $U$ of the morphism induced in cohomology by the inclusion $\phi$ defined in \eqref{morfismo da end in hom(h0,h1)}.

\smallskip

Note, that the dgLa morphism $r: D_U\to A^{0,\ast}_X(\End E)$ induces a natural transformation of functors:
\[ \Def_{D_U} \to \Def_{A^{0,*}_X(\End E)}.\]

\begin{lemma} \label{rel obst dgla functors}
A complete relative obstruction theory of the natural transformation 
$\Def_{D_U} \to \Def_{A^{0,*}_X(\End E)}$ is contained in $\Hom(U, H^1(X,E))$. 
\end{lemma}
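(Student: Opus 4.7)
The plan is to mimic the standard dgLa obstruction construction, but carried out relatively along the surjective morphism $r\colon D_U \to A_X^{0,*}(\End E)$, and then identify the relevant cohomology as $\Hom(U,H^1(X,E))$ using what was already computed for $M^*$.

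More concretely, fix a small extension $e\colon 0\to J\to B\to A\to 0$ and an element $(x_A,y_B)\in \Def_{D_U}(A)\times_{\Def_{A^{0,*}_X(\End E)}(A)}\Def_{A^{0,*}_X(\End E)}(B)$ represented by Maurer--Cartan elements. Since $r$ is surjective in each degree, I would first lift $x_A$ to some $\tilde{x}_B\in D_U^1\otimes\mathfrak{m}_B$ chosen so that $r(\tilde{x}_B)=y_B$; such a lift exists because any set-theoretic lift of $x_A$ can be corrected in $M^1\otimes J$ to match the prescribed image $y_B$. I would then set
\[ v_e(x_A,y_B) \;:=\; \Bigl[d\tilde{x}_B+\tfrac{1}{2}[\tilde{x}_B,\tilde{x}_B]\Bigr]\in H^2(M^*)\otimes J. \]
Because $y_B$ satisfies Maurer--Cartan, the image of this element under $r$ vanishes, so it does lie in $M^2\otimes J$; the small-extension condition $J\cdot\mathfrak{m}_B=0$ then gives $[\tilde{x}_B,\cdot]\otimes J=0$, which both yields the cocycle condition (via a Jacobi computation on $d(d\tilde{x}_B+\tfrac12[\tilde{x}_B,\tilde{x}_B])$) and shows that changing the choice $\tilde{x}_B\leadsto \tilde{x}_B+m$ with $m\in M^1\otimes J$ alters the representative only by $dm$. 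Hence the class $v_e(x_A,y_B)$ is well defined.

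Completeness is immediate: a lift to $\Def_{D_U}(B)$ is exactly a choice of $\tilde{x}_B$ with $d\tilde{x}_B+\tfrac12[\tilde{x}_B,\tilde{x}_B]=0$, so vanishing of $v_e$ is equivalent to the existence of such a lift. Base change along a morphism of small extensions is functorial in the obvious way since all the operations (choice of lift, differential, bracket, induced map on $J$) are. Finally, combining the identification of $M^*$ as $\Hom^*(U,Q_U)$ with $U$ in degree $-1$ (already used in the excerpt) with the fact that $Q_U$ has $H^{-1}=U$, $H^0=H^0(X,E)/U$ and $H^{j-1}=H^{j-1}(X,E)$ for $j\geq 2$, one gets $H^2(M^*)\cong \Hom(U,H^1(X,E))$, which places the obstruction in the claimed space.

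The main thing to be careful about is the first step: ensuring that an $r$-compatible lift $\tilde{x}_B$ really exists for an arbitrary pair $(x_A,y_B)$ in the fibered product, and that the resulting $v_e$ is genuinely independent of this choice up to coboundaries in $M^*$ only (not in $D_U$). This is where the small-extension hypothesis $J\cdot \mathfrak{m}_B=0$ is essential, both for bilinearity arguments killing $[\tilde{x}_B,m]$ and for interpreting the difference of two candidate obstructions as a pure differential in $M^*\otimes J$.
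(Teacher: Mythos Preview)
Your proposal is correct and follows essentially the same approach as the paper: lift compatibly along the surjection $r$, form the curvature $d\tilde{x}+\tfrac12[\tilde{x},\tilde{x}]$, and observe that it lands in the kernel piece identified with $\Hom(U,H^1(X,E))$. The only cosmetic difference is that you take the class directly in $H^2(M^*)\cong\Hom(U,H^1(X,E))$, whereas the paper first places it in $H^2(D_U)\otimes J$ and then invokes the long exact sequence \eqref{seconda successione lunga comologia DU} to see it lies in $\ker\gamma$; your route is slightly more direct and also spells out well-definedness and base change, which the paper leaves implicit.
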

\begin{proof}
Let $0 \to J \to B \to A \to 0$ be a small extension. 
Let 
\[
x=\left((x_{-1}, x_i),  \tilde{x}_i)\right)  \in \Def_{D_U}(A) \times_{\Def_{A^{0,*}_X(\End E)}(A)} \Def_{A^{0,*}_X(\End E)}(B),\]
thus $x_i \in \MC_{A^{0,*}_X(\End E)} (A)$ lifts to $\tilde{x}_i \in \MC_{A^{0,*}_X(\End E)}(B)$. 
Choose a lifting  $\tilde{x}_{-1} \in \Hom(U, A^{0,0}_X(E)) \otimes B$ of $x_{-1}$ and define $\tilde{x}=(\tilde{x}_{-1}, \tilde{x}_i)$. 
The relative obstruction of $x$ is the class of $ob(x)=d\tilde{x} + \frac{1}{2}[\tilde{x}, \tilde{x}] \in H^2(D_U) \otimes J$. 
Tensoring the sequence in \eqref{seconda successione lunga comologia DU} with $J$, we get the exact sequence:
\[ \ldots \to \Hom (U, H^{1}(X,E))\otimes J \to H^2(D_U) \otimes J  \stackrel{\gamma}{\to} H^2(X,\End E) \otimes J  \to \cdots \] 
Since the element $ob(x)$ goes to zero under the map $\gamma$, 
 the relative obstruction $ob(x)$ is contained in $\Hom (U, H^{1}(X,E)) \otimes J$.

The defined obstruction is complete. Indeed, if there exists a lifting $\tilde{x}\in \Def_{D_U}(B)$ of $x$, it satisfies the Maurer-Cartan equation, thus $ob (x) =d\tilde{x} + \frac{1}{2}[\tilde{x}, \tilde{x}]=0$.   \end{proof}

The following proposition is one of the main result of this section.

\begin{proposition}\cite[Corollary 4.1.14]{elenatesi} \label{dgla D controlla def (E,U)}
The dgLa $D_U$ controls deformations of the pair $(E,U)$. 
The isomorphism of functors is given, for all $A\in \Art_\CC$, by 
$$\begin{array}{rrll}
\Phi: & \Def_{D_U}(A)& \longrightarrow &\Def_{(E,U)}(A) \\
& x& \longrightarrow & \left( (\ker(\deltabar+x_0), \Id+x_{-1} \right)
\end{array}$$
\end{proposition}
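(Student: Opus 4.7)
The strategy is to define $\Phi$ on Maurer--Cartan elements, show that it factors through gauge equivalence, and exhibit an inverse. Since the underlying sheaf deformation $E_A := \ker(\deltabar + x_0)$ is already handled by Proposition \ref{dgla che governa Def E}, the essential new content is understanding how the component $x_{-1}$ of an element $x = (x_{-1}, x_0) \in D_U^1 \otimes \mathfrak{m}_A$ encodes the lift of the subspace $U$ of sections.

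The first step is to unpack the Maurer--Cartan equation in $D_U$. For $x = (x_{-1}, x_0)$ with $x_{-1}: U \to A^{0,0}_X(E) \otimes \mathfrak{m}_A$ and $x_0 \in A^{0,1}_X(\End E) \otimes \mathfrak{m}_A$, a direct computation of the differential and bracket on $D_U \subseteq \Hom^*(Q_U, Q_U)$ shows that $dx + \tfrac12 [x,x] = 0$ splits into
\[ \deltabar x_0 + \tfrac12[x_0, x_0] = 0 \qquad \text{and} \qquad \deltabar \circ x_{-1} + x_0 \circ i + x_0 \circ x_{-1} = 0. \]
Using $\deltabar \circ i = 0$, the second equation rewrites as $(\deltabar + x_0) \circ (i + x_{-1}) = 0$, i.e.\ the map $i + x_{-1} : U \otimes A \to A^{0,0}_X(E) \otimes A$ lands in $\ker(\deltabar + x_0) = H^0(E_A)$. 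This lets us define $\Phi(x) := (E_A, i + x_{-1})$ as an element of $\Def_{(E,U)}(A)$; commutativity of diagram \eqref{def(E,U)} is automatic because $x_{-1}$ has coefficients in $\mathfrak{m}_A$.

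Next I would show that $\Phi$ respects gauge equivalence and is surjective. A gauge element $a = (a_{-1}, a_i) \in D_U^0 \otimes \mathfrak{m}_A$ consists of $a_{-1}: U \to U \otimes \mathfrak{m}_A$ and $a_i \in A^{0,0}_X(\End E) \otimes \mathfrak{m}_A$. By Proposition \ref{dgla che governa Def E}, $\phi := e^{a_i}$ gives an isomorphism of sheaf deformations $E_A \to E'_A$ with $y_0 := e^{a_i} * x_0$; expanding $e^a * x$ in $D_U$ component-by-component shows that, simultaneously, $\phi \circ (i + x_{-1}) = (i + y_{-1}) \circ e^{a_{-1}}$, which is exactly the compatibility in diagram \eqref{isodef(E,U)} with $\psi := e^{a_{-1}}$. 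Conversely, given $(E_A, i_A) \in \Def_{(E,U)}(A)$, Proposition \ref{dgla che governa Def E} produces $x_0 \in \MC_{A^{0,*}_X(\End E)}(A)$ with $E_A = \ker(\deltabar + x_0)$, and composing $i_A$ with the inclusion $H^0(E_A) \hookrightarrow A^{0,0}_X(E) \otimes A$ yields a map $U \otimes A \to A^{0,0}_X(E) \otimes A$ that reduces to $i$ modulo $\mathfrak{m}_A$, hence equals $i + x_{-1}$ for a unique $x_{-1}$; the fact that $i_A$ takes values in $H^0(E_A)$ is exactly the second Maurer--Cartan equation for the pair $(x_{-1}, x_0)$.

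The main technical obstacle I expect is the bookkeeping required for gauge equivalence: one must expand $e^a * x$ carefully in $D_U$ and identify the induced transformation of the $x_{-1}$-component with the conjugation $(i + x_{-1}) \mapsto \phi \circ (i + x_{-1}) \circ \psi^{-1}$, and then verify that the two pieces of freedom in the inverse construction (the gauge class of the representative $x_0$ in $A^{0,*}_X(\End E)$ and the choice of $\psi$ in an isomorphism of pairs) together match exactly the two components $A^{0,0}_X(\End E) \oplus \Hom(U, U)$ of $D_U^0$, so that $\Phi$ descends to a well-defined bijection on gauge equivalence classes and is functorial in $A$.
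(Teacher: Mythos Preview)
Your argument is correct, and the well-definedness and gauge-compatibility computations match the paper's almost verbatim: the paper carries out the same calculation $(\deltabar+x_0)\circ(\Id+x_{-1})|_{U\otimes A}=0$ from the Maurer--Cartan equation, and the same expansion of $e^a*x$ to identify $\phi=e^{z_0}$ and $\psi=e^{z_{-1}}$ with the isomorphism data of Definition~\ref{def.(E,U)}. Injectivity in the paper is also argued exactly as you outline, by writing an isomorphism of pairs as $(e^w,e^z)$ and assembling it into a gauge element $(w,z)\in D_U^0\otimes\mathfrak m_A$.

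The one substantive difference is surjectivity. You construct a preimage directly: lift $E_A$ to some $x_0$ via Proposition~\ref{dgla che governa Def E}, then read off $x_{-1}$ as $i_A-i$ after embedding $H^0(E_A)$ into $A^{0,0}_X(E)\otimes A$. (A small caveat: that embedding is only available once you have replaced $E_A$ by the isomorphic deformation $\ker(\deltabar+x_0)$, so strictly you should transport $i_A$ along that isomorphism first; this is harmless.) The paper takes a less direct route: it proves separately that $\Phi$ is \emph{smooth} (Proposition~\ref{liscezza morf funtori}) by identifying the relative obstruction theories of $\Def_{D_U}\to\Def_{A^{0,*}_X(\End E)}$ and of $\Def_{(E,U)}\to\Def_E$ via the Dolbeault--\v Cech correspondence (Lemmata~\ref{rel obst def functors} and~\ref{rel obst dgla functors}), and then deduces surjectivity from smoothness. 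Your approach is shorter and self-contained; the paper's approach costs an obstruction-theoretic comparison but yields the smoothness of $\Phi$ and the explicit relative obstruction space $\Hom(U,H^1(X,E))$ as byproducts, which are used again later in the article.
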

\begin{proof} 
 For completeness and clearness we write here the proof. 
We leave to the reader the classically known calculations  for 
the isomorphism of the functors of  Proposition \ref{dgla che governa Def E}.
We divide the proof in two steps.

\smallskip
\noindent{\bf{First step: the natural transformation of functors $\Phi$ is well defined.}}
 Let $x=(x_{-1},x_i)\in D_U^1 \otimes m_A$  be a  Maurer-Cartan element and prove that it  defines a deformation of the pair $(E,U)$.
It is a classical fact that $E_A:=\ker(\deltabar+x_0)$ with the map $\pi_A := \Id \otimes \pi$ defines a locally free sheaf that is deformation of the sheaf $E$ (Proposition \ref{dgla che governa Def E}). The map 
  $i_A:= \Id+x_{-1}$
fits in the diagram \eqref{def(E,U)}, in particular $i_A(U\otimes A) \subset H^0(X, E_A)$. Indeed, 
\begin{eqnarray*} 
(\deltabar+x_0)\circ(\Id+x_{-1})|_{U\otimes A} &= &\deltabar\circ\Id+ \deltabar\circ x_{-1}+x_0\circ\Id+x_0\circ x_{-1}\\
&=& 0+(\deltabar\circ x_{-1}+x_0\circ\Id)+x_0\circ x_{-1}\\
&=&(dx)_{-1}+\frac{1}{2}[x,x]_{-1}=0,
\end{eqnarray*}
since $U\subset H^0(X,E)$ and $x \in \MC_{D_U}(A)$.
Then, the maps $i_A$ and $\pi_A$ makes the diagram \eqref{def(E,U)} commutative. Indeed, since $x_{-1} \in D_U \otimes m_A$:
\[ \pi_A \circ i_A|_{U\otimes A} = (\Id \otimes \pi) \circ (\Id + x_{-1})|_{U\otimes A} = (\Id \otimes \pi)|_{U\otimes A} +  (\Id \otimes \pi) \circ x_{-1}  
= \pi + 0= i \circ \pi. \]

Moreover, the morphism above is well defined on deformation functors. Let $x,y \in \MC_{D_U}(A)$ be two gauge equivalent elements via $z\in D_U^0 \otimes m_A$, i.e. $e^z *x=y$.
For $i\geq 0$, the elements $e^{z_i}: A^{0,i}_X(E) \otimes A \to A^{0,i}_X(E) \otimes A $ define an isomorphism of degree zero and, as classically know, the gauge relation is equivalent to the commutativity 
\[ \deltabar+y_0= e^{[z_i,-]}(\deltabar+x_0)=e^{z_{i+1}}\circ(\deltabar+x_0)\circ e^{-z_i}. \]
Thus, $\phi:=e^{-z_0}$ defines an isomorphism between the deformed sheaves $\ker(\deltabar + x_0)$ and $\ker(\deltabar + y_0)$. 

Similarly, the element $\psi:=e^{z_{-1}}: U \otimes A \to U\otimes A$ defines an isomorphism and the gauge relation is equivalent to the commutativity of the diagram \eqref{isodef(E,U)}. Indeed, 

\begin{eqnarray} \label{eqgauge2}
 y_{-1} &=&  e^{z}* x_{-1} =   x_{-1}+\sum_{n=0}^{+\infty} \frac{([z,-])^n}{(n+1)!}([z,x]_{-1}-(dz)_{-1})=\nonumber \\ 
&=& x_{-1}+\sum_{n=0}^{+\infty} \frac{([z,-])^n}{(n+1)!}([z,x]_{-1}+[z,\Id]_{-1})=
x_{-1}+\sum_{n=1}^{+\infty} \frac{([z,-])^n}{n!}(\Id+x_{-1})=\nonumber \\
&=& \sum_{n=0}^{+\infty} \frac{([z,-])^n}{n!}(\Id+x_{-1})-\Id= e^{[z,-]}(\Id+x_{-1})-\Id  \ , 
\end{eqnarray}
where we use $(dz)_{-1}=i\circ z_{-1}-z_0\circ i=-[z,\Id]_{-1}$. Thus:
$$ \Id+ y_{-1}= e^{[z,-]}(\Id+x_{-1})=e^{z_{0}}\circ(\Id+x_{-1})\circ e^{-z_{-1}},$$
as we wanted. 

\smallskip
\noindent{\bf{Step two: $\Phi$ is an isomorphism of functors.}}\\
First the injectivity of $\Phi(A)$ for every $A\in \Art_\CC$. 
Suppose that $x=(x_{-1}, x_i)$ and $ y=(y_{-1}, y_i) \in \MC_{D_U}(A)$ induce isomorphic  deformations $(\ker(\deltabar + x_0), \Id+ x_{-1})$ and $ (\ker(\deltabar + y_0), \Id+ y_{-1})$ via the isomorphisms $(\phi, \psi)$, as in Definition \ref{def.(E,U)}.   

It is classical to lift $\phi$ to an isomorphism of the form $e^z$, with $z \in A^{0,0}(\End E) \otimes m_A$ and to get the following commutative diagram  
\begin{equation} \label{iniett}
\xymatrix{
0 \ar[r]& U\otimes A \ar[d]^{\psi}\ar[r]^{\Id+x_{-1}\ \ \ }& \ker(\deltabar+x_0)\ar[r]^{i}\ar[d]^{\phi=e^z} & A_X^{(0,0)}(E)\otimes A \ar[r]^{\ \ \ \ \ \deltabar+x_0}\ar[d]^{e^z} & \cdots \\
0 \ar[r]& U\otimes A \ar[r]^{\Id+y_{-1} \ \ \  } &\ker(\deltabar+y_0)\ar[r]^{i}       & A_X^{(0,0)}(E)\otimes A \ar[r]^{\ \ \ \ \ \deltabar+y_0} & \cdots }
\end{equation}
The isomorphism $\psi$ is of the form $e^w$, with $w \in \Hom(U,U)\otimes m_A$ too, because it is the identity on the residue field. 
Thus there exists an element  $t=(w,z) \in D_U^0\otimes m_A$, such that $e^t$ is an isomorphism that makes the diagram (\ref{iniett}) commutative. It is an easy calculation, similar to (\ref{eqgauge2}), to see that the commutativity is equivalent to the gauge relation $y=e^t* x$.

Moreover, by next Proposition \ref{liscezza morf funtori}, the morphism of functor $\Phi$ is smooth, thus $\Phi(A)$ is surjective, for all $A\in \Art_\CC$. 
\end{proof}

\begin{proposition} \label{liscezza morf funtori}
The morphism of functors 
$\Phi: \Def_{D_U} \longrightarrow \Def_{(E,U)}$ defined in Proposition \ref{dgla D controlla def (E,U)} 
is smooth.
\end{proposition}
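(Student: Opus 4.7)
My plan is to verify smoothness of $\Phi$ by constructing the required lift explicitly. The key observation is that the section datum $i_B$ of a deformation of the pair $(E,U)$ itself essentially encodes the degree $-1$ component of a Maurer--Cartan element of $D_U$, so that once the degree $0$ component is chosen compatibly, the rest is forced.

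Fix a surjection $B \twoheadrightarrow A$ in $\Art_\CC$ with kernel $J$, and a compatible pair
\[
\bigl(\bar x_A,\, [(E_B, i_B)]\bigr) \in \Def_{D_U}(A) \times_{\Def_{(E,U)}(A)} \Def_{(E,U)}(B),
\]
with $\Phi(\bar x_A) = [(E_B, i_B)]|_A$, and choose a representative $x_A = ((x_A)_{-1}, (x_A)_0) \in \MC_{D_U}(A)$. I first construct the degree-zero component. Since $\Psi\colon \Def_{A_X^{0,*}(\End E)} \to \Def_E$ is an isomorphism by Proposition \ref{dgla che governa Def E}, the class $[E_B]$ lifts $[E_A]$ and corresponds to a gauge class in $\Def_{A_X^{0,*}(\End E)}(B)$ lifting $[(x_A)_0]$. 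A standard gauge manipulation lets me choose a representative $(x_B)_0 \in \MC_{A_X^{0,*}(\End E)}(B)$ with $(x_B)_0|_A = (x_A)_0$ on the nose and an isomorphism $\theta\colon E_B \xrightarrow{\sim} \ker(\bar\partial + (x_B)_0)$ whose reduction modulo $J$ coincides with the identification $E_A = \ker(\bar\partial + (x_A)_0)$ encoded in $\Phi(\bar x_A) = [(E_A, i_A)]$.

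For the degree $-1$ component, I transport $i_B$ through $\theta$ to obtain $\widetilde{\imath}_B \colon U \otimes B \to A_X^{0,0}(E) \otimes B$ landing in $\ker(\bar\partial + (x_B)_0)$ and reducing to $\Id|_U + (x_A)_{-1}$ over $A$, and set
\[
(x_B)_{-1} := \widetilde{\imath}_B - \Id|_U \in \Hom\bigl(U, A_X^{0,0}(E)\bigr) \otimes m_B.
\]
For $x_B := ((x_B)_{-1}, (x_B)_0)$ the Maurer--Cartan equation in $D_U \otimes m_B$ splits into a degree $0$ piece, which holds by construction of $(x_B)_0$, and a degree $-1$ piece which, by the same calculation already carried out in the first step of the proof of Proposition \ref{dgla D controlla def (E,U)}, amounts to $(\bar\partial + (x_B)_0) \circ \widetilde{\imath}_B = 0$; this is automatic since $\widetilde{\imath}_B$ takes values in $\ker(\bar\partial + (x_B)_0)$. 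Consequently $\bar x_B$ lifts $\bar x_A$ and $\Phi(\bar x_B) = [(E_B, i_B)]$ via $\theta^{-1}$, establishing the lifting property of Definition \ref{def.smoothness}.

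The delicate point I expect is the first step: coherently choosing a gauge representative $(x_B)_0$ restricting strictly to $(x_A)_0$, and an isomorphism $\theta$ whose reduction is strictly the given identification over $A$. Extending such data from $A$ to $B$ requires combining the gauge freedom along $A_X^{0,0}(\End E) \otimes J$ with the lifting of automorphisms of the deformed sheaf from $A$ to $B$; both are standard, but the bookkeeping is the substantive content of the argument. Once these coherent choices are made, the construction of $(x_B)_{-1}$ is essentially tautological and no obstruction analysis is needed.
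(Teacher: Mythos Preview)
Your direct construction is a genuinely different route from the paper's. The paper does not build the lift by hand; instead it compares the complete relative obstruction theories of $\Def_{D_U}\to\Def_{A_X^{0,*}(\End E)}$ and of $\Def_{(E,U)}\to\Def_E$ (Lemmata~\ref{rel obst def functors} and~\ref{rel obst dgla functors}), showing via an explicit Dolbeault--\v{C}ech computation that the two obstruction classes in $\Hom(U,H^1(X,E))\otimes J$ coincide. Since the datum $(E_B,i_B)$ witnesses the vanishing of the geometric obstruction, the dgLa obstruction vanishes as well and the lift exists. Your approach bypasses the Dolbeault--\v{C}ech comparison entirely and is more elementary and self-contained; the paper's approach, in exchange, makes the relative obstruction space $\Hom(U,H^1(X,E))$ explicit and reusable elsewhere.

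There is one point you should patch. An isomorphism in $\Def_{(E,U)}(A)$ is a \emph{pair} $(\varphi_A,\psi_A)$ as in diagram~\eqref{isodef(E,U)}, with $\psi_A\colon U\otimes A\to U\otimes A$ an automorphism that need not be the identity. Your $\theta$ is arranged so that $\theta|_A=\varphi_A$, but then transporting $i_B$ through $\theta$ alone yields
\[
(\theta\circ i_B)|_A=\varphi_A\circ i_B|_A=(\Id+(x_A)_{-1})\circ\psi_A,
\]
not $\Id+(x_A)_{-1}$ as you assert. Thus $x_B|_A$ agrees with $x_A$ only up to gauge by $(-\log\psi_A,0)\in D_U^0\otimes m_A$; compare the calculation~\eqref{eqgauge2}. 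This suffices for $\bar x_B|_A=\bar x_A$, so the argument survives, but it is exactly the sort of bookkeeping you flagged as delicate, and it occurs in the step you called ``essentially tautological''. An equally clean fix is to first replace the representative $(E_B,i_B)$ by the isomorphic pair $(E_B,\,i_B\circ\widetilde\psi_A^{\,-1})$ for any lift $\widetilde\psi_A$ of $\psi_A$, thereby reducing to the case $\psi_A=\Id$; then your reduction claim holds on the nose.
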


\begin{proof}  Let $0\to J \to B\to A \to 0$ be a small extension. Let $x =(x_{-1}, x_i) \in \Def_{D_U}(A)$ and $\Phi(x)=(E_A,i_A) \in \Def_{(E,U)}(A)$. The smoothness of $\Phi$ is equivalent to saying that $x$ lifts to an element $\tilde{x}\in \Def_{D_U}(B)$
if and only if  $(E_A,i_A)$ lifts to a pair $(E_B,i_B)\in \Def_{(E,U)}(B)$. 
One direction is obvious. 

For the other one, we recall that the morphism of functors
$\Psi: \Def_{A^{0,*}(\End E)} \to \Def_E$, defined in Proposition \ref{dgla che governa Def E}, is smooth. Thus it is enough to show that the relative obstruction theories of Lemmas \ref{rel obst def functors} and \ref{rel obst dgla functors} are isomorphic via the correspondence between the Dolbeault and \v{C}ech cohomology. 

As in Lemma \ref{rel obst dgla functors}, let 
\[
x=\left((x_{-1}, x_i),  \tilde{x}_i)\right)  \in \Def_{D_U}(A) \times_{\Def_{A^{0,*}_X(\End E)}(A)} \Def_{A^{0,*}(\End E)}(B)
\]
and let $ob(x)\in \Hom(U, H^1(X, E))\otimes J$ be its obstruction. Observe that here $H^1(X, E)$ is the Dolbeault cohomology group and let us find the element in \v{C}ech cohomology that corresponds to $ob(x)$.
For every $s \in U \otimes A$, $ob(x)(s) \in H^1(X,E)\otimes J$. This class is represented by a closed element in $A_X^{0,*}(E)\otimes J$, denoted again by $ob(x)(s)$, which is then locally exact. Therefore there exist an open cover $\mathcal W=\{ W_i\}$ of $X$ and $\tau_i(s) \in A_{W_i}^{0,0}(E) \otimes J$, such that $\deltabar \tau_i(s)= ob(x)(s)|_{W_i} $.
Define on $W_i \cap W_j$ the elements $\sigma_{ij}(s)=\tau_i(s)- \tau_j(s)$,   they are \v{C}ech cocycles and their class $[\{\sigma_{ij}(s)\}_{ij}] \in H^1(X,E)\otimes J$ defines the corresponding element $ob(x)(s)$ in \v{C}ech cohomology.  

As in Lemma  \ref{rel obst def functors}, let
\[ \left( (E_A, i_A), E_B \right) \in \Def_{(E,U)}(A) \times_{\Def_{E}(A)} \Def_{E}(B), \]
where $\Phi(x)=(E_A,i_A)$ and $E_B=\Psi( \tilde{x}_i)$. For every $s \in U\otimes A$, the obstruction to lift $i_A(s)\in H^0(E_A)$ to a section of $E_B$ lives in $H^1(X,E)\otimes J$ and is given by $\delta(i_A)(s)$, where $\delta$ is the coboundary map
\[\ldots \to H^0(E_B) \to H^0(E_A) \stackrel{\delta}{\to} H^1(E) \otimes J\to \ldots. \]
Recall that the construction of the coboundary map is obtained by chasing the following
diagram
$$\xymatrix{ 0\ar[r] & \check{C}^0(\mathcal{W},E)\otimes J\ar[r]\ar[d]_{\check{\delta}}& \check{C}^0(\mathcal{W},E_B) \ar[r]\ar[d]_{\check{\delta}}& \check{C}^0(\mathcal{W},E_A)\ar[r]\ar[d]_{\check{\delta}}&   0 \\
0\ar[r] & \check{C}^1(\mathcal{W},E)\otimes J\ar[r]& \check{C}^1(\mathcal{W},E_B)  \ar[r]& \check{C}^1(\mathcal{W},E_A) \ar[r]&   0.   } 
$$  
The element $(i_A)(s)=\{ i_A(s)|_{W_i}\}_i \in H^0(E_A)$ can be lifted to an element $i_A(s)|_{W_i} - \tau_i (s) \in \check{C}^0(\mathcal{W}, E_B)$. Applying the \v{C}ech differential to it, we get $\check{\delta} (i_A(s)|_{W_i} - \tau_i (s))= \{i_A(s)|_{W_i}-\tau_i (s)  - i_A(s)|_{W_j}+\tau_j (s)\}_{ij} =\{ \tau_i (s)- \tau_j (s)\}_{ij} = \{\sigma_{ij}(s)\}_{ij}$. 
As we state, the two obstructions coincides. 
\end{proof}

 As a direct consequence of Proposition \ref{dgla D controlla def (E,U)}, we get the following result,  already obtained in \cite[Th\'eoreme 3.12]{he}. See also  \cite[Proposition 3.4]{bgmm03} for the curve case. 

\begin{corollary} \label{cor.sp tang e ostr Def(E,U)}
The tangent space to $\Def_{(E,U)}$ is $ H^1(D_U)$  and all obstructions are contained in $H^2(D_U)$.  
\end{corollary}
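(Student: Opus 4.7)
The plan is to deduce the corollary directly from Proposition \ref{dgla D controlla def (E,U)} combined with the general dgLa--to--deformation functor correspondence recalled at the end of Section 1.

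First, I would invoke Proposition \ref{dgla D controlla def (E,U)}, which provides an isomorphism of deformation functors
\[
\Phi : \Def_{D_U} \xrightarrow{\ \cong\ } \Def_{(E,U)}.
\]
In particular, $\Phi$ induces a bijection on the sets of values over $\CC[\epsilon]$ and, being an isomorphism of functors, it sends the universal obstruction theory of one side to the universal obstruction theory of the other.

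Next, I would apply the general results for the deformation functor associated with a dgLa. Namely, for any dgLa $L$ the tangent space $\Def_L(\CC[\epsilon])$ is canonically identified with $H^1(L)$ and a complete obstruction theory for $\Def_L$ is given by $H^2(L)$. Applying this to $L = D_U$ yields
\[
t_{\Def_{D_U}} = H^1(D_U), \qquad \text{obstructions in } H^2(D_U).
\]
Transporting these identifications along the isomorphism $\Phi$ gives the two desired statements for $\Def_{(E,U)}$.

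There is essentially no obstacle: the content of the corollary is entirely absorbed by Proposition \ref{dgla D controlla def (E,U)}, and the remainder is the formal dictionary between a dgLa and its associated deformation functor. The only thing one might want to remark is that, through the exact sequence \eqref{seconda successione lunga comologia DU}, both $H^1(D_U)$ and $H^2(D_U)$ admit a more concrete description in terms of $H^*(X,\End E)$ and $\Hom(U, H^*(X,E))$, which makes the corollary effectively computable in applications; but this is a comment rather than part of the proof.
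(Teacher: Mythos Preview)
Your proposal is correct and matches the paper's own treatment: the corollary is stated there as a direct consequence of Proposition \ref{dgla D controlla def (E,U)}, with no further argument beyond the standard identification of tangent and obstruction spaces of $\Def_L$ with $H^1(L)$ and $H^2(L)$.
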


\begin{remark}   
If the ground field is any algebraically closed field of characteristic zero,  in the same spirit as for deformations of the sheaf $E$ (see  Remark  \ref{rmk.Thom-Whitney}), we expect to define a dgLa that controls deformations of $(E,U)$ using the Thom-Whitney complex associated with the sheaf of homomorphism of a suitable complex of sheaves \cite{IM}. \end{remark} 

\smallskip

In the following, we briefly focus on smoothness of the forgetful morphism $r: \Def_{(E,U)} \to \Def_E$. 
The following corollary is a direct consequence of Lemmata \ref{rel obst def functors} and \ref{rel obst dgla functors}. Otherwise, it can be obtained applying Theorem \ref{teo. liscio se tangete suriettivo e iniettivo ostruzione} to the exact sequence \eqref{seconda successione lunga comologia DU}. 

\begin{corollary} \label{prop.rel obstr classica}
If  $\Hom(U, H^1(E))=0$, the forgetful morphism of functors $r:\Def_{(E,U)} \to \Def_E$ is  smooth. 
\end{corollary}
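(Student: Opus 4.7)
The statement is essentially a direct corollary of the two lemmas on relative obstructions. My plan is to follow the first, more conceptual path: invoke the completeness of the relative obstruction theory for $r$ to reduce smoothness to the vanishing of the ambient obstruction space $\Hom(U,H^1(X,E))$. I will then sketch the alternative route through Theorem \ref{teo. liscio se tangete suriettivo e iniettivo ostruzione} and the exact sequence \eqref{seconda successione lunga comologia DU}, since this also gives insight into why the hypothesis is natural.

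First, I will unwind what needs to be checked. By Definition \ref{def.smoothness}, for smoothness of $r:\Def_{(E,U)}\to\Def_E$ it suffices to show that for every surjection $B\to A$ in $\Art_\kk$ (and hence for every small extension $0\to J\to B\to A\to 0$, by the usual devissage) and every pair
\[ \left((E_A,i_A),E_B\right)\in \Def_{(E,U)}(A)\times_{\Def_E(A)}\Def_E(B),\]
there exists $(E_B,i_B)\in\Def_{(E,U)}(B)$ restricting to it. By Lemma \ref{rel obst def functors} the obstruction to such a lifting lies in $\Hom(U,H^1(X,E))\otimes J$, and the obstruction theory is complete. Thus under the assumption $\Hom(U,H^1(X,E))=0$ this obstruction automatically vanishes, and completeness gives the desired lifting. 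This immediately yields smoothness.

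Alternatively, and equivalently via Proposition \ref{dgla D controlla def (E,U)} and the identification of $\Def_E$ with $\Def_{A_X^{0,*}(\End E)}$, the forgetful morphism corresponds to the one induced by the dgLa map $r:D_U\to A_X^{0,*}(\End E)$. The relevant portion of the long exact sequence \eqref{seconda successione lunga comologia DU} is
\[ H^1(D_U)\longrightarrow H^1(X,\End E)\stackrel{\alpha}{\longrightarrow}\Hom(U,H^1(X,E))\stackrel{\beta}{\longrightarrow}H^2(D_U)\stackrel{\gamma}{\longrightarrow}H^2(X,\End E). \]
Under the hypothesis, the middle term vanishes, whence the tangent map $t_r:H^1(D_U)\to H^1(X,\End E)$ is surjective and the obstruction map $\gamma:H^2(D_U)\to H^2(X,\End E)$ is injective. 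By Lemma \ref{rel obst dgla functors} the obstruction theory in $H^2(D_U)$ is complete, and the required compatibility with the universal obstruction theory of $\Def_E$ in $H^2(X,\End E)$ is built into the dgLa morphism. Theorem \ref{teo. liscio se tangete suriettivo e iniettivo ostruzione} then yields the smoothness of $r$.

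There is essentially no obstacle beyond correctly citing the two lemmas: both of them already do the real work, namely locating the relative obstructions in $\Hom(U,H^1(X,E))$ and proving completeness. The only small point to be careful about is that the relative obstruction theories of Lemmata \ref{rel obst def functors} and \ref{rel obst dgla functors} are identified under the isomorphism of functors $\Phi$ of Proposition \ref{dgla D controlla def (E,U)}; this identification has already been verified inside the proof of Proposition \ref{liscezza morf funtori}, so no new verification is required here.
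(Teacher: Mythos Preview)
Your proof is correct and follows exactly the two routes the paper itself indicates: the corollary is stated there as a direct consequence of Lemmata~\ref{rel obst def functors} and~\ref{rel obst dgla functors}, with the alternative of applying Theorem~\ref{teo. liscio se tangete suriettivo e iniettivo ostruzione} to the exact sequence~\eqref{seconda successione lunga comologia DU}. The only minor quibble is that in your second route the completeness of the absolute obstruction theory in $H^2(D_U)$ comes from the general dgLa theory (recalled just after the definition of $\Def_L$) rather than from Lemma~\ref{rel obst dgla functors}, which concerns the \emph{relative} obstruction; this does not affect the argument.
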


\begin{remark}  \label{cor. alpha sur} 
By Proposition  \ref{prop liscezza funtori}, the smoothness of the forgetful morphism $r: \Def_{(E,U)} \to \Def_E$ implies the equivalence between the smoothness of the two functors  $\Def_E$ and $\Def_{(E,U)}$.
\end{remark}
 
\begin{corollary} \label{corollario alpha su implica r liscio}
If the map $\alpha: H^1(X,\End E) 
\to \Hom (U, H^{1}(X,E))$ that appears in \eqref{seconda successione lunga comologia DU} is surjective, then  the forgetful morphism $r: \Def_{(E,U)} \to \Def_E$ is smooth. 
\end{corollary}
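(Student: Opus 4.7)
The plan is to transport the question to the dgLa side. By Proposition \ref{dgla D controlla def (E,U)} and Proposition \ref{dgla che governa Def E}, the forgetful morphism $r:\Def_{(E,U)}\to\Def_E$ is identified, via the isomorphisms $\Phi$ and $\Psi$, with the morphism $\Def_{D_U}\to\Def_{A^{0,*}_X(\End E)}$ induced by the surjective dgLa projection $r:D_U\to A^{0,*}_X(\End E)$. It is therefore enough to prove smoothness of this latter morphism.

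To this end I would apply Lemma \ref{rel obst dgla functors}, which furnishes a complete relative obstruction theory for $r:\Def_{D_U}\to\Def_{A^{0,*}_X(\End E)}$ whose values lie in $H^2(D_U)\otimes J$. The proof of that lemma shows that any relative obstruction class factors through the connecting map $\beta:\Hom(U,H^1(X,E))\otimes J\to H^2(D_U)\otimes J$ of the long exact sequence \eqref{seconda successione lunga comologia DU}: one chooses any lift $\tilde{x}_{-1}$ of the given datum, considers the $2$-cocycle $d\tilde{x}+\tfrac{1}{2}[\tilde{x},\tilde{x}]$, and observes that its $\End E$-component vanishes (since $\tilde{x}_i$ is already Maurer--Cartan), so that its class in $H^2(D_U)$ is killed by $\gamma$ and hence belongs to $\im\beta$ by exactness.

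The conclusion is then purely homological. By exactness of \eqref{seconda successione lunga comologia DU} at $\Hom(U,H^1(X,E))$, the surjectivity of $\alpha$ forces $\ker\beta=\Hom(U,H^1(X,E))$, that is, $\beta$ is identically zero. Consequently every relative obstruction vanishes in $H^2(D_U)\otimes J$, and completeness of the obstruction theory yields the required lift for every small extension. This proves smoothness of $r:\Def_{D_U}\to\Def_{A^{0,*}_X(\End E)}$, and transporting back through the isomorphisms $\Phi$ and $\Psi$ gives smoothness of $r:\Def_{(E,U)}\to\Def_E$. The only mildly delicate point is the identification of the obstruction as lying in $\im\beta$, which is already contained in Lemma \ref{rel obst dgla functors}; no further computation beyond what is done there is needed.
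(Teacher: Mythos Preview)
Your proposal is correct and follows essentially the same approach as the paper. Both arguments reduce to the dgLa side via Propositions \ref{dgla che governa Def E} and \ref{dgla D controlla def (E,U)}, invoke Lemma \ref{rel obst dgla functors} to place the relative obstruction in $\ker\gamma=\im\beta\subset H^2(D_U)\otimes J$, and then use exactness of \eqref{seconda successione lunga comologia DU} to conclude this space vanishes when $\alpha$ is surjective; the paper simply phrases the last step as ``$\gamma$ injective'' rather than ``$\beta=0$'', which is the same thing.
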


\begin{proof}
Let $0\to J \to B \to A \to 0$ be a small extension in $\Art_\CC$ and consider
\[
x=\left((x_{-1}, x_i),  \tilde{x}_i)\right)  \in \Def_{D_U}(A) \times_{\Def_{A^{0,*}_X(\End E)}(A)} \Def_{A^{0,*}_X(\End E)}(B).\]
Since $x_i$ lifts to $\tilde{x}_i$, from the diagram of obstruction theories 
\begin{equation} \label{ostr}
\xymatrix{ \Def_{D_U}(A) \ar[r]^-{ob} \ar[d] & H^2(D_U) \otimes J \ar[d]^{\gamma} \\
           \Def_{A^{0,*}_X(\End E)} (A)\ar[r]^-{ob}          & H^2(\End E) \otimes J                            }
\end{equation}
 we get that the relative obstructions to lift $x$ to an element in $\Def_{(E,U)}(B)$ is contained in $\ker \gamma$. This kernel is trivial: Indeed, looking at \eqref{seconda successione lunga comologia DU}, 
the surjectivity of the map $\alpha: H^1(X,\End E)  \to \Hom (U, H^{1}(X,E))$ implies that the morphism $\gamma : H^2(D_U) 
\to H^2(\End E) $  is injective. 
\end{proof}

\begin{remark}\label{remark alpha su allora Hom(U, H^1(E))=0}
The condition $\Hom(U, H^1(E))=0$ is equivalent to the surjectivity of the map $\alpha: H^1(X,\End E)  \to \Hom (U, H^{1}(X,E))$.
Indeed,  by Corollary \ref{corollario alpha su implica r liscio}, if $\alpha$ is surjective, then $r$ is smooth and also the map  $H^1(D_U) \to H^1(X,\End E)$ on the tangent spaces of the functors is surjective. By the exact sequence \eqref{seconda successione lunga comologia DU},  the map $\alpha$ is actually the zero map and so  $ \Hom (U, H^{1}(X,E)=0$.

The other implication is obvious.
\end{remark}

\begin{corollary}   In the notation above, we have 
\[ \dim t_{\Def_{(E,U)}} \geq \dim t_{\Def_{E}}
 - k \cdot \dim H^1(X,E),\]
 where $k$ is the dimension of $U \subseteq H^0(X,E)$.  
\end{corollary}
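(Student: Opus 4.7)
The plan is to read this inequality directly off the long exact sequence \eqref{seconda successione lunga comologia DU}, using the identifications $t_{\Def_{(E,U)}} = H^1(D_U)$ and $t_{\Def_E} = H^1(X, \End E)$ from Corollary \ref{cor.sp tang e ostr Def(E,U)} and Proposition \ref{dgla che governa Def E}.

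First I would isolate the relevant piece of \eqref{seconda successione lunga comologia DU}, namely
\[
H^1(D_U) \longrightarrow H^1(X, \End E) \xrightarrow{\ \alpha\ } \Hom(U, H^1(X,E)).
\]
Exactness at $H^1(X, \End E)$ says that the image of the first map equals $\ker \alpha$, so
\[
\dim H^1(D_U) \;\geq\; \dim \ker \alpha \;=\; \dim H^1(X, \End E) - \dim \operatorname{im} \alpha.
\]

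Next I would bound $\dim \operatorname{im} \alpha$ from above by the dimension of the target:
\[
\dim \operatorname{im} \alpha \;\leq\; \dim \Hom(U, H^1(X,E)) \;=\; k \cdot \dim H^1(X,E),
\]
where the last equality uses $\dim U = k$. Combining these two inequalities yields
\[
\dim t_{\Def_{(E,U)}} \;=\; \dim H^1(D_U) \;\geq\; \dim H^1(X, \End E) - k \cdot \dim H^1(X,E) \;=\; \dim t_{\Def_E} - k \cdot \dim H^1(X,E),
\]
which is exactly the claim.

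There is no real obstacle here: the statement is a purely linear-algebraic consequence of the long exact sequence already established. The only thing to keep in mind is that, since we only want the inequality and not a formula, we can afford to discard both the contribution from $\Hom(U, H^0(X,E)/U)$ on the left (which would only improve the bound) and the exact value of $\operatorname{im} \alpha$ (replacing it with the crude upper bound $k \cdot h^1(E)$). Equality holds precisely when $\alpha$ is surjective, which by Remark \ref{remark alpha su allora Hom(U, H^1(E))=0} is equivalent to $\Hom(U, H^1(X,E)) = 0$.
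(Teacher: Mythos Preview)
Your proof is correct and essentially identical to the paper's: both isolate the segment $H^1(D_U) \to H^1(X,\End E) \xrightarrow{\alpha} \Hom(U,H^1(X,E))$ of the long exact sequence \eqref{seconda successione lunga comologia DU}, use $\dim H^1(D_U) \geq \dim\ker\alpha = h^1(\End E) - \dim\im\alpha$, and then bound $\dim\im\alpha \leq k\cdot h^1(E)$.

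One small caveat on your closing remark about equality: surjectivity of $\alpha$ (equivalently $\Hom(U,H^1(X,E))=0$) only guarantees equality in the second inequality $\dim\im\alpha \leq k\cdot h^1(E)$; for the overall bound to be sharp you would also need $\dim H^1(D_U) = \dim\ker\alpha$, i.e., the map $\Hom(U, H^0(X,E)/U) \to H^1(D_U)$ in \eqref{seconda successione lunga comologia DU} to vanish, which is a separate condition.
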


\begin{proof}
By the long exact sequence \eqref{seconda successione lunga comologia DU}, 
\[
 \cdots \to \Hom (U, H^0(X,E)/U  ) \to H^1(D_U)  \stackrel{\beta}{\to} H^1(X,\End E)   \stackrel{\alpha}{\to}   \Hom (U, H^{1}(X,E))  \to   \cdots \\
\]
we have
\[
\dim t_{\Def_{(E,U)}} = \dim H^1(D_U) \geq \dim \im \ \beta = \dim \ker  \ \alpha=  \dim H^1(X,\End E) - \dim \im \alpha
\]
\[
  \geq   \dim H^1(X,\End E) - \dim    \Hom (U, H^{1}(X,E))= \dim t_{\Def_{E}}  -k \cdot \dim H^1(X,E).
\]
\end{proof}

Using our description of deformations via dgLas, we can generalise a classical result. 
Fix a section  $s \in H^0(X,E)$, the morphism $\phi$ of  \eqref{morfismo da end in hom(h0,h1)} induces in cohomology the cup product
\[
- \cup s:  H^1(X, \End E) \to H^1(X,E),
\]
where $a\cup s=\alpha(a)(s)$, for every  $a \in H^1(X, \End E)$.

\begin{proposition} \label{prop.sezione si estende se cup product va a zero}
Let $E$ be a locally free sheaf over a projective variety $X$. A section $s  \in H^0(X,E)$ can be extended to a section of a first order deformation of $E$ associated to an element  $a \in H^1(X, \End E)$ if and only if $a \cup s =0 \in H^1(X,E)$.
\end{proposition}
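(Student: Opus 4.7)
My plan is to deduce the proposition directly from the dgLa description of deformations of the pair $(E,\langle s\rangle)$ developed in the preceding section. Take $U=\langle s\rangle\subseteq H^0(X,E)$, so $U$ is one-dimensional. By Proposition \ref{dgla D controlla def (E,U)} the dgLa $D_U$ controls $\Def_{(E,U)}$, and its tangent space is $H^1(D_U)$ by Corollary \ref{cor.sp tang e ostr Def(E,U)}. The forgetful transformation $r:\Def_{(E,U)}\to\Def_E$ is induced by the surjective dgLa morphism $r:D_U\to A_X^{0,*}(\End E)$. Hence at the level of tangent spaces, a class $a\in H^1(X,\End E)=t_{\Def_E}$ lifts to an element of $t_{\Def_{(E,U)}}=H^1(D_U)$ (i.e. $s$ extends to a section of the first order deformation of $E$ associated with $a$) if and only if $a\in\ker\bigl(H^1(X,\End E)\xrightarrow{\alpha}\Hom(U,H^1(X,E))\bigr)$, by the exactness of \eqref{seconda successione lunga comologia DU}. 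Since $U=\langle s\rangle$, this is equivalent to $\alpha(a)(s)=0$ in $H^1(X,E)$.

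The remaining point is to identify $\alpha(a)(s)$ with the cup product $a\cup s$. The map $\alpha$ is, by construction, induced in cohomology by the dgLa inclusion $\phi$ of \eqref{morfismo da end in hom(h0,h1)} restricted to the subspace $U\subseteq A_X^{0,0}(E)$. Concretely, choosing a $\bar\partial$-closed representative $x\in A_X^{0,1}(\End E)$ of $a$, one has $\alpha(a)(s)=[\phi(x)(s)]=[x\cdot s]\in H^1(X,E)$. Since the Dolbeault description of the cup product $H^1(X,\End E)\otimes H^0(X,E)\to H^1(X,E)$ is represented on forms exactly by the action of $\End E$ on $E$, this class is precisely $a\cup s$, which matches the definition $a\cup s=\alpha(a)(s)$ given just before the proposition.

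The only point that requires any care is the identification between $\alpha$ and the cup product, but this is essentially built into the definition of $\phi$. Alternatively, one can bypass the exact sequence entirely and argue by hand via Proposition \ref{dgla che governa Def E}: the first order deformation associated with $x$ is $E_\epsilon=\ker(\bar\partial+\epsilon x)$, an extension $\tilde s=s+\epsilon t$ of $s$ to $H^0(E_\epsilon)$ exists if and only if $(\bar\partial+\epsilon x)(s+\epsilon t)=0$, which using $\bar\partial s=0$ and $\epsilon^2=0$ reduces to $\bar\partial t=-x\cdot s$. Such a $t\in A_X^{0,0}(E)$ exists precisely when $[x\cdot s]=0$ in $H^1(X,E)$, that is, when $a\cup s=0$. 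This direct verification both proves the statement and confirms the identification $\alpha(a)(s)=a\cup s$ used above.
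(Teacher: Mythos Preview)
Your proof is correct and follows essentially the same approach as the paper: set $U=\langle s\rangle$, use the exact sequence \eqref{seconda successione lunga comologia DU} to identify the image of $r$ with $\ker\alpha$, and invoke the definition $a\cup s=\alpha(a)(s)$ given just before the proposition. Your additional direct verification via $\ker(\bar\partial+\epsilon x)$ is a nice supplement that the paper does not include, but the core argument is the same.
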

\begin{proof}
Let $s \in H^0(X,E)$ be a section and define $U=\langle s \rangle$.
Recalling our descriptions via dgLas of the first order deformations given after Proposition \ref{dgla che governa Def E} and in Corollary \ref{cor.sp tang e ostr Def(E,U)}, 
we can rewrite the exact sequence (\ref{seconda successione lunga comologia DU}) as
\[ \ldots \to H^1(D_U)= \Def_{(E,U)}(\CC[\epsilon])\stackrel{r}{\to} H^1(X, \End E)=  \Def_E(\CC[\epsilon]) \stackrel{\alpha}{\to} \Hom(U, H^1(X,E)) \to \ldots \] 
The section $s$ can be extended to a deformation associated to $a \in H^1(X, \End E)=  \Def_E(\CC[\epsilon])$ if and only if $a \in \im r$. Since $\im r= \ker \alpha$ we have the required description. 
\end{proof}
The same result is classically known for line bundles over a curve (see \cite[Lemma page 186]{ACGH}) and for line bundles over a projective variety (see \cite[Proposition 3.3.4]{Sernesi}).
 
\smallskip

This result can be reinterpreted in terms of some special maps and it can be seen as a generalization of \cite[Proposition 4.2 (i)]{ACGH}, \cite[Section 2]{Gt} and \cite[Section 4.3]{pardini}.  In the spirit of \cite[Section 4.3]{pardini}, we define a generalization of the Petri map - we will properly introduce in the next section - as the map induced by the cup product:
\[ \mu_0: H^0(X,E) \otimes H^0(X, K_X \otimes E^*) \to H^0(X, K_C \otimes E \otimes E^*), \]
where $K_X$ is the canonical bundle of X, $E^*$ is the dual bundle of $E$ 
and the map
\[ \alpha_n: H^1(X, \End E) \otimes H^{n-1}(\OO_X) \to H^n(\End E), \] 
is given by the cup product. 
Proposition \ref{prop.sezione si estende se cup product va a zero} can be stated saying that for all $\sigma \in H^1(X, \End E) \otimes H^{n-1}(\OO_X)$ and for all $\psi \in H^0(X,E) \otimes H^0(X, K_X \otimes E^*)$ the following cup product vanishes:
\[ \alpha_n(\sigma) \cup \mu_0(\psi) =0,  \]
or equivalently that $\alpha_n\left( H^1(X, \End E) \otimes H^{n-1}(\OO_X)\right)\subset H^n(\OO_X)$ is orthogonal to $\im \mu_0 \subset H^0(X, K_C \otimes E \otimes E^*).$

\bigskip

In the particular case of deformations of pairs $(E, H^0(E))$, the exact sequence
\eqref{seconda successione lunga comologia DU} splits
\[ 0 \to H^1(D_U) \to H^1(X,\End E)   \stackrel{\alpha}{\to}  \Hom (H^0(X,E), H^{1}(X,E)) \to \ldots \]
Thus the tangent space $t_{\Def_{(E,H^0(E))}}= H^1(D_U)$ can be identified with the kernel of the morphism $\alpha:  H^1(X,\End E)  \to   \Hom (H^0(E), H^{1}(X,E))$.

\begin{corollary} \label{Cor. TDef(E,H0(E))}
The tangent space to the deformations of the pair $(E, H^0(E))$ can be identified with
\[ t_{\Def_{(E,H^0(E))}} = \{ a \in H^1(X,\End E) \ \mid \ a \cup s =0, \ \forall  \ s \in H^0(X,E) \}.  \]
\end{corollary}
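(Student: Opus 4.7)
The plan is to specialise the long exact sequence \eqref{seconda successione lunga comologia DU} to the case $U = H^0(X,E)$ and then identify the map $\alpha$ with the cup product.

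First I would observe that when $U = H^0(X,E)$ the quotient $H^0(X,E)/U$ is zero, so the term $\Hom(U, H^0(X,E)/U)$ in \eqref{seconda successione lunga comologia DU} vanishes, and a fragment of that sequence reduces to the short exact sequence
\[
0 \to H^1(D_U) \xrightarrow{\beta} H^1(X,\End E) \xrightarrow{\alpha} \Hom(H^0(X,E), H^1(X,E)).
\]
By Corollary \ref{cor.sp tang e ostr Def(E,U)}, the tangent space $t_{\Def_{(E,H^0(E))}}$ is $H^1(D_U)$, so exactness identifies it with $\ker \alpha$.

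Next I would identify $\alpha$ explicitly. By construction, $\alpha$ is the restriction to $U=H^0(X,E)$ of the map on cohomology induced by the inclusion of dgLas $\phi: A_X^{0,*}(\End E) \to \Hom^*(A_X^{0,*}(E), A_X^{0,*}(E))$ defined in \eqref{morfismo da end in hom(h0,h1)}. As already recalled in the discussion preceding Proposition \ref{prop.sezione si estende se cup product va a zero}, this induced map is exactly the cup product: for $a \in H^1(X,\End E)$ and $s \in H^0(X,E)$, one has $\alpha(a)(s) = a \cup s \in H^1(X,E)$.

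Combining the two steps, $a \in t_{\Def_{(E,H^0(E))}}$ if and only if $\alpha(a)=0$ as an element of $\Hom(H^0(X,E), H^1(X,E))$, which is exactly the condition $a \cup s = 0$ for every $s \in H^0(X,E)$. Since everything needed is already established in the section, I expect no real obstacle — the statement is a direct translation of the exact sequence together with the identification of $\alpha$ as the cup product.
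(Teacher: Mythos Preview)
Your proposal is correct and follows essentially the same approach as the paper: the argument is exactly the paragraph preceding the corollary, namely specialising \eqref{seconda successione lunga comologia DU} to $U=H^0(X,E)$ so that $\Hom(U,H^0(X,E)/U)=0$, identifying $t_{\Def_{(E,H^0(E))}}=H^1(D_U)$ with $\ker\alpha$, and then using the earlier description of $\alpha$ as the cup product.
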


In the case of line bundles, the description of the tangent space above is also given in \cite[Proposition 3.3, (i)]{pardini}.

\section{Deformations of the pair $(E,U)$ over a curve} 
In this section, we restrict our attention on curves, i.e., we fix a smooth projective curve $C$ of genus $g$ and we study deformations of the pair $(E,U)$, where $E$ is a locally free sheaf of rank $n$ and degree $d$ on $C$ and $U \subseteq H^0(C,E)$ is a subspace of sections of dimension $k$.

\smallskip

First suppose $E=L$ to be a line bundle on $C$. The Petri map, introduced first by Petri in \cite{Petri} and studied deeply in \cite{arbarellocornalba} and \cite{ACGH}, is classically defined as the map induced by the cup product: 
\[\mu_0: U \otimes H^0(C, K_C\otimes L^*) \to H^0(C, K_C), \]
 where $K_C$ denotes the canonical sheaf of $C$ and $L^*$ the dual line bundle of $L$. 
In \cite{bgmm03, Gt, pardini} a generalization of $\mu_0$ to the case of a vector bundle $E$ is introduced as the map induced by the cup product: 
\[ \mu_0: U \otimes H^0(C, K_C \otimes E^*) \to H^0(C, K_C \otimes E \otimes E^*), \]
where $E^*$ is the dual of the vector bundle $E$. 

 Classically for line bundles and also in the successive generalizations [loc. cit.], the Petri map plays a role in the study of the smoothness of the deformations of the pair $(E,U)$ over a curve $C$.  We aim to recover and generalise these kind of results.

Consider the sequence \eqref{seconda successione lunga comologia DU}; in the case of curves, it  reduces to
\begin{equation} 
\begin{split} \label{curva  succesione lunga comologia DU}
0  \to H^0(D_U) \to H^0( C,\End E) 
 \to \Hom (U, H^0(C,E)/U ) \to\\
 \to  H^1(D_U) \to H^1(C,\End E) 
  \stackrel{\alpha}{\to} \Hom (U, H^{1}(C,E)) \to H^2(D_U) \to 0.
 \end{split}
\end{equation}
So we are able to recover  \cite[Proposition 3.4 (i)]{bgmm03}.

\begin{lemma} \label{lemma.equiv h2 zero e petri iniettiva}
In the above notations, the following conditions are equivalent:
\begin{itemize}
\item $H^2(D_U)=0$, 
\item the map $\alpha$ is surjective,
\item $\Hom(U, H^1(E))=0$,
\item the Petri map $\mu_0$ is injective.
\end{itemize}
 \end{lemma}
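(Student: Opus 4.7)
The plan is to read off everything from the long exact sequence \eqref{curva succesione lunga comologia DU}, which on a curve terminates at $H^2(D_U) \to 0$ because $H^i(C, \End E) = 0$ and $H^i(C, E) = 0$ for $i \geq 2$ by dimensional reasons. In particular, $H^2(D_U)$ is identified with $\coker \alpha$, so the equivalence (1) $\Leftrightarrow$ (2) is immediate: $H^2(D_U) = 0$ iff $\alpha \colon H^1(C, \End E) \to \Hom(U, H^1(C, E))$ is surjective.

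For (2) $\Leftrightarrow$ (3), I would invoke Remark \ref{remark alpha su allora Hom(U, H^1(E))=0}, which already establishes this equivalence in general (not only over curves). The non-trivial direction there is that surjectivity of $\alpha$ together with Corollary \ref{corollario alpha su implica r liscio} forces smoothness of $r$, hence surjectivity of $H^1(D_U) \to H^1(C, \End E)$; combined with exactness of \eqref{curva succesione lunga comologia DU} this forces $\alpha$ itself to be the zero map, so its image $\Hom(U, H^1(C, E))$ must be trivial. The other direction is obvious since $\alpha$ takes values in the zero space.

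The equivalence (2) $\Leftrightarrow$ (4) rests on Serre duality on the curve $C$. Since $E$ is locally free, Serre duality gives
\[
H^0(C, K_C \otimes E^*) \cong H^1(C, E)^*, \qquad H^0(C, K_C \otimes E \otimes E^*) \cong H^1(C, \End E)^*,
\]
and the Petri map
\[
\mu_0 \colon U \otimes H^0(C, K_C \otimes E^*) \to H^0(C, K_C \otimes E \otimes E^*)
\]
is, by naturality of the cup product and of Serre duality, precisely the $\kk$-linear dual of $\alpha$. Indeed, under these identifications $\mu_0$ becomes a map $U \otimes H^1(C, E)^* \to H^1(C, \End E)^*$, equivalently the dual of $\alpha \colon H^1(C, \End E) \to \Hom(U, H^1(C, E))$. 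Since the latter is a map of finite-dimensional vector spaces, $\mu_0 = \alpha^*$ is injective iff $\alpha$ is surjective, giving (2) $\Leftrightarrow$ (4).

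The main obstacle I anticipate is the verification that the Petri map, as defined via cup product on sections, literally coincides with the Serre dual of $\alpha$. This is a compatibility between two descriptions of the same pairing and requires care about signs and conventions, but is a standard unravelling; otherwise the proof is essentially a formal consequence of the long exact sequence, Serre duality, and the previously proved Remark~\ref{remark alpha su allora Hom(U, H^1(E))=0}.
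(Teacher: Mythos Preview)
Your proposal is correct and follows essentially the same route as the paper: the paper's proof likewise reads (1)$\Leftrightarrow$(2) off the long exact sequence \eqref{curva  succesione lunga comologia DU}, invokes Remark~\ref{remark alpha su allora Hom(U, H^1(E))=0} for (2)$\Leftrightarrow$(3), and uses Serre duality to identify $\mu_0$ as the dual of $\alpha$ for (2)$\Leftrightarrow$(4). Your only addition is the explicit caveat about checking the compatibility of the Petri cup product with the Serre dual of $\alpha$, which the paper simply asserts.
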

 \begin{proof}
 The equivalence between the first two conditions follows from the exact sequence (\ref{curva  succesione lunga comologia DU}). The equivalence between the second and the third is Remark \ref{remark alpha su allora Hom(U, H^1(E))=0}. Finally, the second and the last condition are equivalent because $\alpha$ and $\mu_0$ are dual map. Indeed, by Serre duality $H^1(C,\End E)^* \cong H^0(C, K_C \otimes E \otimes E^* )$ and 
 $ (\Hom (U, H^{1}(C,E)) )^*\cong (U^* \otimes H^{1}(C,E))^*\cong U \otimes H^{1}(C,E)^*\cong U \otimes H^0(C, K_C \otimes E^*)$.
\end{proof}

 Aiming to link these conditions with the smoothness of the functor of deformations of $(E,U)$, we prove the following result.
\begin{lemma} \label{lemma.calcolo beta}
In the above notations
\[ h^1(D_U)=  h^2(D_U)+h^0(D_U) + k \chi(E) -\chi(\End E) -k^2,\]
where $\chi(E)$ and $\chi(\End E)$ denote the Euler characteristics of $E$ and $\End E$ respectively.
 \end{lemma}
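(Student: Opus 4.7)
The plan is to extract the equality from the Euler characteristic of the long exact sequence (\ref{curva  succesione lunga comologia DU}), which on a curve is a finite exact sequence of seven terms starting with $0$ and ending with $H^2(D_U) \to 0$. Since the alternating sum of dimensions of a bounded exact sequence vanishes, I get
\begin{equation*}
h^0(D_U) - h^0(C,\End E) + \dim \Hom(U, H^0(C,E)/U) - h^1(D_U) + h^1(C,\End E) - \dim \Hom(U, H^1(C,E)) + h^2(D_U) = 0.
\end{equation*}
Solving for $h^1(D_U)$ is then a bookkeeping exercise, so no real obstacle is anticipated.

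To carry this out, I would rewrite the three ``ancillary'' dimensions as follows. First, $\dim \Hom(U, H^0(C,E)/U) = k(h^0(C,E) - k) = k\,h^0(C,E) - k^2$, using $\dim U = k$ and the inclusion $U \subseteq H^0(C,E)$. Second, $\dim \Hom(U, H^1(C,E)) = k\,h^1(C,E)$. Third, since $C$ is a smooth projective curve, $H^i(C,\End E) = 0$ for $i \geq 2$, so $-h^0(C,\End E) + h^1(C,\End E) = -\chi(\End E)$.

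Substituting these into the relation above and grouping the $k$ terms gives
\begin{equation*}
h^1(D_U) = h^0(D_U) + h^2(D_U) - \chi(\End E) + k\bigl(h^0(C,E) - h^1(C,E)\bigr) - k^2,
\end{equation*}
and the identity $h^0(C,E) - h^1(C,E) = \chi(E)$ on a curve yields exactly the claimed formula. The only point requiring care is to verify that the sequence (\ref{curva  succesione lunga comologia DU}) indeed terminates with $H^2(D_U) \to 0$; this follows from the vanishing $H^j(C, \End E) = 0$ for $j \geq 2$ applied in the original exact sequence (\ref{seconda successione lunga comologia DU}), together with the vanishing of $H^j(M^\ast) \cong \Hom(U, H^{j-1}(C,E))$ for $j \geq 3$, both of which rely on $\dim C = 1$.
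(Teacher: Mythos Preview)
Your proof is correct and follows essentially the same route as the paper: both take the alternating sum of dimensions in the exact sequence \eqref{curva  succesione lunga comologia DU}, identify $\dim\Hom(U,H^0(E)/U)=k(h^0(E)-k)$ and $\dim\Hom(U,H^1(E))=k\,h^1(E)$, and rearrange using $\chi(E)=h^0(E)-h^1(E)$ and $\chi(\End E)=h^0(\End E)-h^1(\End E)$ on a curve. Your added remark justifying the termination of the sequence at $H^2(D_U)$ is a welcome piece of care that the paper leaves implicit.
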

\begin{proof}
From the above exact sequence \eqref{curva  succesione lunga comologia DU}, we obtain that
\[
h^0(D_U) - h^0(\End E)+ k \cdot \left(h^0(E) -k\right) - h^1(D_U) + h^1(\End E) - k \cdot h^1(E) + h^2(D_U)=0;\]
therefore
\[
\begin{split}h^1(D_U)& =  h^2(D_U) + h^0(D_U) +k \cdot\left(h^0(E) -h^1(E)\right) +h^1(\End E) -h^0(\End E)   -k^2\\
 & =   h^2(D_U)+h^0(D_U) +k \cdot \chi(E) -\chi(\End E) -k^2.
 \end{split}
\]
\end{proof}  

\begin{remark} \label{rmk.calcolo beta}
Let $E$ be a vector bundle of rank $n$ and degree $d$ on a curve $C$ of genus $g$, then
$\chi(E) = d+n(1-g) $ (see \cite{HarrisMorrison} page 154), then $\chi(\End E) = n^2(1-g) $.
Therefore 
\[
\begin{split} k \chi(E) -\chi(\End E) - k^2&= k\left(d+n(1-g)\right) -n^2(1-g) -k^2 \\
&= k (d+n(1-g)) +n^2(g-1) - k^2
\end{split}
\]
\end{remark}

Then, as in  \cite[Definition 2.7]{bgmm03} and \cite[Definition 2.1]{Gt}, we can  introduce the Brill-Noether number.
\begin{definition}
 Let  $E$ be a vector bundle of rank $n$ and degree $d$ on a curve $C$ of genus $g$ and let $U$ be a subspace of sections of dimension $k$.
The \emph{Brill-Noether number} is
\[\beta(n,d,k)= n^2(g-1)  -k (k- d + n(g-1)) +1.\]
\end{definition}
\begin{remark} 
This number is a generalization to vector bundles of the well known \emph{Brill-Noether} number $\rho$ for the data of a degree $d$ line bundle over a curve of genus $g$ with a subspace of sections of dimension $k$:
\[  \rho= g-k(g-d+k),  \]
defined in \cite{arbarellocornalba} and \cite{ACGH}.
As for the classical case of $\rho$, $\beta$ gives an estimate of the dimension of the Brill-Noether loci in the corresponding moduli spaces. 
 \end{remark}

We are now ready to prove our main result of this section. It generalises \cite[Proposition 4.1]{ACGH}, that for a line bundle $L$ on a curve connects the injectivity of the Petri map with the smoothness of the deformations of the pair $(L,U)$ and calculates the dimension of the concerned moduli space in the smooth case. 
\begin{proposition} \label{prop.beta e liscezza}
Let $E$ be a vector bundle of rank $n$ and degree $d$ on the curve $C$ of genus $g$ and let $U$ be a subspace of sections of dimension $k$.
Then, the tangent space to deformations of the pair $(E,U)$ has dimension
\[ \beta(n,d,k)-1+h^0(D_U)  +h^2(D_U). \]
Moreover, the functor $\Def_{(E,U)}$ is smooth and its tangent space has dimension $\beta(n,d,k)-1+h^0(D_U)$ if and only if $H^2(D_U)=0$ if and only if the Petri map is injective.
\end{proposition}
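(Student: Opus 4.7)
The plan is to reduce the proposition to combining three facts already established: the identification $t_{\Def_{(E,U)}} = H^1(D_U)$ (Corollary \ref{cor.sp tang e ostr Def(E,U)}), the Euler-characteristic computation of Lemma \ref{lemma.calcolo beta}, and the chain of equivalences of Lemma \ref{lemma.equiv h2 zero e petri iniettiva}. No new geometric input is needed; what remains is essentially bookkeeping.

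First I would establish the dimension formula. By Corollary \ref{cor.sp tang e ostr Def(E,U)} the tangent space is $H^1(D_U)$, so Lemma \ref{lemma.calcolo beta} gives
\[ \dim t_{\Def_{(E,U)}} = h^2(D_U) + h^0(D_U) + k\chi(E) - \chi(\End E) - k^2. \]
Using Remark \ref{rmk.calcolo beta} one rewrites
\[ k\chi(E) - \chi(\End E) - k^2 = k(d+n(1-g)) + n^2(g-1) - k^2 = -k\bigl(k - d + n(g-1)\bigr) + n^2(g-1). \]
Comparing with the definition of $\beta(n,d,k)$ yields $k\chi(E) - \chi(\End E) - k^2 = \beta(n,d,k) - 1$, which gives
\[ \dim t_{\Def_{(E,U)}} = \beta(n,d,k) - 1 + h^0(D_U) + h^2(D_U), \]
the first claim of the proposition.

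Next I would address the equivalences. The implication $H^2(D_U)=0 \Leftrightarrow$ Petri injective is exactly Lemma \ref{lemma.equiv h2 zero e petri iniettiva}, so only the link with smoothness remains. If $H^2(D_U)=0$, then by Corollary \ref{cor.sp tang e ostr Def(E,U)} all obstructions for $\Def_{(E,U)}$ vanish, hence the functor is smooth; moreover, the dimension formula just proved immediately gives $\dim t_{\Def_{(E,U)}} = \beta(n,d,k) - 1 + h^0(D_U)$. Conversely, if $\Def_{(E,U)}$ is smooth and its tangent space has dimension $\beta(n,d,k) - 1 + h^0(D_U)$, then substituting into the dimension formula forces $h^2(D_U)=0$; here the dimensional hypothesis is what is doing the work, since smoothness alone does not a priori force the non-universal obstruction space $H^2(D_U)$ to vanish.

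The main (minor) obstacle is simply the algebraic manipulation showing that $k\chi(E) - \chi(\End E) - k^2$ coincides with $\beta(n,d,k) - 1$; once this identity is in place, everything else is an immediate consequence of results already collected in the section.
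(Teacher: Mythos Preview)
Your proposal is correct and follows essentially the same route as the paper: compute $h^1(D_U)$ via Lemma \ref{lemma.calcolo beta} and Remark \ref{rmk.calcolo beta}, recognise the resulting expression as $\beta(n,d,k)-1$, and then invoke Lemma \ref{lemma.equiv h2 zero e petri iniettiva} together with the fact that obstructions lie in $H^2(D_U)$. Your observation that in the converse direction it is the dimensional hypothesis (not smoothness) that forces $h^2(D_U)=0$ makes explicit a point the paper also relies on.
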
 
\begin{proof}
The tangent space to the deformations of the pair $(E,U)$ is $H^1(D_U)$. Then, according to  Lemma \ref{lemma.calcolo beta} and Remark \ref{rmk.calcolo beta},  the dimension of it is given by
 \begin{eqnarray}
h^1(D_U) &=& h^2(D_U)+h^0(D_U) +k \chi(E) -\chi(\End E) -k^2 =\nonumber \\ 
&=&   h^2(D_U) + h^0(D_U) +  k(d+n(1-g)) +n^2(g-1) -k^2 \nonumber \\ 
&= & h^2(D_U) + h^0(D_U)  -1 + \beta(n,d,k), \nonumber
\end{eqnarray} 
As already pointed out in Lemma \ref{lemma.equiv h2 zero e petri iniettiva}, the Petri map is injective if and only if its dual map $\alpha$ is surjective, that is equivalent to the condition that $H^2(D_U)=0$. 
Since the obstructions to deform the pair $(E,U)$ are contained in $H^2(D_U)$, if it vanishes, then the functor $\Def_{(E,U)}$ is smooth and the dimension of the tangent space is easily calculated by the above formula. 
For the other direction, the condition on the dimension of the tangent space implies that $H^2(D_U)=0$. 
\end{proof}

\begin{remark} 
Proposition \ref{prop.beta e liscezza}  is the analogous to \cite[Proposition 3.10]{bgmm03}.
In this article, the author focus their attention on the moduli space of coherent systems from the global point of view. In order to construct a moduli space, they need a suitable notion of stability. The stability conditions, they define, imply that $h^0(D_U)=1$. Thus  
the formula for the  dimension of the tangent space reduces to  $\beta(n,d,k) +h^2(D_U) $ (see Lemma 3.5 [loc. cit]). 
\end{remark}

\section{Deformations of a locally free sheaves and some of its sections}

In this section, we consider a locally free sheaf $E$ of $\OO_X$-modules on a smooth projective variety $X$, such that $\dim H^0(X, E) \geq k$ and study infinitesimal deformations of $E$ such that at least $k$ independent sections of $E$ lift to the deformed locally free sheaf. 

These deformations correspond to  the infinitesimal deformations of the locally free sheaf $E$ induced by an infinitesimal 
deformation of a pair $(E,U)$, for some subspace $U\subseteq H^0(X, E)$ with $\dim U = k$.
In other words, they are the deformations in the image of the forgetful maps of functors:
\[ r_U: \Def_{(E,U)} {\to} \Def_E, \]
for some $U\subseteq H^0(X,E)$, with $\dim U = k$. We denote this subfunctor of $\Def_E$ with $\Def^{k}_E$. 
More explicitly, we give the following definition.
\begin{definition}
Let $E$ be a locally free sheaf of $\OO_X$-modules on a smooth projective variety $X$, such that $h^0(X,E) \geq  k$.
Let $Gr(k,H^0(E))$ be the grassmannian of all subspaces of $H^0(X, E)$ of dimension $k$. We define the functor $\Def^{k}_E:\Art_\kk \to \Set$, that associates with every $A \in \Art_\kk$  the set
\[ \Def^{k}_E (A)= \bigcup_{U \in Gr(k,H^0(E))} r_U(\Def_{(E,U)}(A)).
\] 
 and call it the \emph{functor of deformations of $E$ with at least $k$ sections}.
\end{definition}

\begin{remark}  \label{rmk.no def funct}
In the case $h^0(X,E)=k$, all sections are required to lift to the deformed locally free sheaf and the functor  $\Def^k_E$  is in one-to-one correspondence via the forgetful morphism  with the functor $\Def_{(E,H^0(E))}$, analysed at the end of Section \ref{sect.def(E,U)}.
Thus, our study of $\Def_{(E,H^0(E))}$ applies completely to it and in particular $\Def_E^k$ is in this case a deformation functor.

In general, the functor $\Def_E^k$ is a functor of Artin rings, but unfortunately, it is not a deformation functor.  Indeed, by definition, if $\kk$ is the ground field, we have
\[\Def_E^k(\kk)=  \bigcup_{U \in Gr(k,H^0(E))} r_U(\Def_{(E,U)}(\kk) )= \{E\}, \]
since each of the functors $\Def_{(E,U)}$ are of Artin rings.

Consider now two morphisms of Artin rings $B \rightarrow A$ and $C \rightarrow A$ and suppose one of them to be surjective. 
The map
\[ \eta: \Def_E^k(B\times_A C) \to \Def_E^k(B) \times_{\Def_E^k(A)} \Def_E^k(C) \]
will be in general not surjective. Indeed, let $(E_B, E_C) \in \Def_E^k(B) \times_{\Def_E^k(A)} \Def_E^k(C)$ and let $U$ and $V$ subspaces of sections of $E$ that lift to $E_B$ and to $E_C$ respectively, such that $U \cap V$ has maximal dimension and suppose $\dim U\cap V <k$. Then the existence of a lift of $(E_B, E_C)$ in $ \Def_E^k(B\times_A C)$ will contradict the   maximality of $\dim U\cap V$.
\end{remark}

From now on, we restrict ourself to the field of complex numbers $\CC$. Even if the description of the locus $\Def_E^k(A)$ for $A \in \Art_\CC$ is still quite mysterious, we can explicitly determine the first order deformations and the vector space they generate.  

\begin{theorem}\label{teorema tangente Def^r}
In the above notations, if $h^0(X, E)=k$, the tangent space to the deformation functor $\Def_E^k$ is 
\[  t_{\Def^k_E} = \Def_E^k(\CC[\epsilon]) = \{ a \in H^1(X,\End E) \ \mid \ a \cup s =0, \ \forall  \ s \in H^0(X,E) \}. \]  
If, instead $h^0(X, E) \geq k+1$, 
the first order deformations of $E$ with at least $k$ sections are described by the cone
\[ \Def_{E}^k(\CC[\epsilon]) = \{\nu \in H^1(X, \End E) \mid \exists U \in {Gr(k,H^0(E))}  \mbox{ such that } \nu \cup s =0, \forall s \in U\} \]
and the vector space generated by it, that we call the tangent space to $\Def_E^k$, is 
\[ t_{\Def_{E}^k} = H^1(X, \End E). \]
\end{theorem}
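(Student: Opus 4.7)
The plan is to treat the two cases separately and to reduce the computation of $\Def_E^k(\CC[\epsilon])$ to understanding the images of the forgetful maps $r_U : \Def_{(E,U)} \to \Def_E$ on tangent spaces.

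First I would dispose of the easy case $h^0(X, E) = k$. Here $Gr(k, H^0(X,E))$ is a single point, so by Remark \ref{rmk.no def funct} the functor $\Def_E^k$ coincides with the image of $r : \Def_{(E, H^0(X, E))} \to \Def_E$. Applying the long exact sequence \eqref{seconda successione lunga comologia DU} with $U = H^0(X,E)$, in which the term $\Hom(U, H^0(X,E)/U)$ vanishes, the image of $r$ on tangent spaces is exactly $\ker \alpha \subseteq H^1(X, \End E)$; Proposition \ref{prop.sezione si estende se cup product va a zero} (equivalently Corollary \ref{Cor. TDef(E,H0(E))}) then identifies this kernel with the stated locus $\{a \in H^1(X, \End E) \mid a \cup s = 0 \text{ for all } s \in H^0(X,E)\}$.

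Now suppose $h^0(X, E) \geq k+1$. The cone description is obtained by unravelling the definition: by Corollary \ref{cor.sp tang e ostr Def(E,U)} one has $\Def_{(E,U)}(\CC[\epsilon]) = H^1(D_U)$, and the long exact sequence \eqref{seconda successione lunga comologia DU} identifies the image of $r_U$ inside $H^1(X, \End E)$ with $K_U := \ker \alpha_U = \{a \in H^1(X, \End E) \mid a \cup s = 0 \text{ for all } s \in U\}$, again by Proposition \ref{prop.sezione si estende se cup product va a zero}. Taking the union over $U \in Gr(k, H^0(X, E))$ yields the stated cone.

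The main obstacle is the claim that the linear span of this cone equals the whole $H^1(X, \End E)$, i.e.\ $\sum_U K_U = H^1(X, \End E)$. I would argue it by Serre duality: the annihilator $K_U^\perp$ inside $H^1(X, \End E)^* \cong H^{n-1}(X, \End E \otimes K_X)$ is the image of $U \otimes H^{n-1}(X, E^* \otimes K_X)$ under the dual cup product map $\alpha_{\mathrm{total}}^*$, so the required identity is equivalent to $\bigcap_U K_U^\perp = 0$. Concretely this says that any $\omega \in H^{n-1}(X, \End E \otimes K_X)$ which admits, for every $k$-dimensional $U \subseteq H^0(X, E)$, a decomposition $\omega = \sum_i s_i \cdot t_i$ with $s_i \in U$ and $t_i \in H^{n-1}(X, E^* \otimes K_X)$ must vanish. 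Since the hypothesis $h^0(X, E) \geq k+1$ ensures that the subspaces $U$ collectively exhaust $H^0(X, E)$, I would finish by a base-point-freeness/generic-vanishing argument that generalises the classical one for line bundles on curves sketched implicitly before Lemma \ref{lemma.equiv h2 zero e petri iniettiva}: being ``divisible'' by every global section of $E$ forces $\omega$ to be zero.
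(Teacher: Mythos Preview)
Your treatment of the case $h^0(X,E)=k$ and of the cone description when $h^0(X,E)\geq k+1$ is correct and coincides with the paper's argument.

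For the spanning claim $t_{\Def_E^k}=H^1(X,\End E)$ you take a genuinely different route. The paper argues constructively: it asserts that it suffices to show that for every nonzero $s\in H^0(X,E)$ and every $w\in H^1(X,E)$ there is some $\nu$ in the cone with $\nu\cup s=w$; since $h^0(X,E)\geq k+1$ one can choose $U\in Gr(k,H^0(E))$ with $s\notin U$ and then ``build the matrix of $\nu$'' so that $\nu\in K_U$ realises the prescribed value at $s$. You instead dualise via Serre duality, reducing to $\bigcap_U K_U^\perp=0$ and identifying each $K_U^\perp$ with the image of the restricted Petri-type map $U\otimes H^{n-1}(X,E^*\otimes K_X)\to H^{n-1}(X,\End E\otimes K_X)$.

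That reduction is correct, but your final step is a genuine gap rather than a routine finish. First, the slogan ``being divisible by every global section of $E$ forces $\omega$ to be zero'' misstates the condition once $k\geq 2$: membership in $K_U^\perp$ only means $\omega=\sum_i s_i\cdot t_i$ for \emph{some} basis $s_1,\dots,s_k$ of $U$, which is strictly weaker than $\omega$ lying in $s\cdot H^{n-1}(X,E^*\otimes K_X)$ for every individual section $s$. Second, even for $k=1$ the classical argument you allude to (the zero divisor of every $s\in|L|$ is dominated by that of $\omega$, hence $\omega=0$ once the moving part of $|L|$ is base-point-free) is specific to line bundles on curves; for higher-rank $E$ a section has no divisor of zeros to compare, and for $\dim X>1$ the class $\omega\in H^{n-1}(X,\End E\otimes K_X)$ is not a holomorphic section of anything, so the promised ``generic-vanishing'' generalisation is not available. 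As written you have traded the spanning statement for an equivalent dual one without actually proving it, whereas the paper's direct construction avoids this detour.
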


\begin{proof}
As already noticed, in the case $h^0(X, E)=k$, the functor  $\Def^k_E$  is in one-to-one correspondence with the functor $\Def_{(E,H^0(E))}$ and the tangent space is described in Corollary \ref{Cor. TDef(E,H0(E))} to be
\[t_{\Def^k_E}\cong t_{\Def_{(E,H^0(E))}} = \{ a \in H^1(X,\End E) \ \mid \ a \cup s =0, \ \forall  \ s \in H^0(X,E) \}. \]  

If $h^0(X, E) \geq k+1$, by definition, 
\[\Def_E^k(\CC[\epsilon]) = \bigcup_{U \in Gr(k,H^0(E))} r_U(\Def_{(E,U)}(\CC[\epsilon])).\]
For each $U \in  Gr(k,H^0(E))$, we calculate the image of the tangent space to deformations of the pair $(E,U)$ using the exact sequence
\eqref{seconda successione lunga comologia DU}:
\[ \ldots \to H^1(X,D_U) \stackrel{r_U}{\to} H^1(X,\End E)   \stackrel{\alpha_U}{\to}  \Hom (U, H^{1}(X,E)) \ldots \]
Thus
\[  r_U(\Def_{(E,U)}(\CC[\epsilon]))= \ker \alpha_U = \{ \nu \in H^1(X, \End E) \mid \nu \cup s =0, \forall s \in U\}  \]
and the first statement is proved. 

For the second statement, we have to prove that the vector space generated by $\Def_E^k(\CC[\epsilon])$ is the whole space $H^1(X,\End E)$.
One inclusion is obvious. For the other one, it is enough to prove that, for all $s \in H^0(X,E)$ non zero section and for all $w \in H^1(X, E)$, there exists an element $\nu \in \Def_E^k(\CC[\epsilon]) $ such that $\nu (s)=w$.
Since $\dim H^0(X, E) \geq k+1$, it is always possible to find a subspace $U \in  Gr(k,H^0(E))$, such that $s \notin U$ and to build the matrix of $\nu$.
\end{proof}

\begin{remark}
This theorem generalises the classical results for line bundles on curves \cite[Proposition 4.2]{ACGH} and line bundles on a smooth projective varieties \cite[Proposition 3.3]{pardini}.
Our explicit description of the tangent space is also a particular case of the one of the Zariski tangent space to the cohomology jump functors done in \cite[Theorem 1.7]{Budur-Rubio} using dgl pairs.

Moreover, our result is coherent with the well known fact that the locally free sheaves $E$ such that $h^0(X, E) \geq k+1$ are contained in the singular locus of the moduli space of locally free sheaves with al least $k$ independent sections. That is classically obtained defining that moduli space as a determinantal variety (see \cite[Proposition 4.2]{ACGH}, \cite[Theorem 2.8]{bgmm03}, \cite[Corollary 2.8]{costa}, et. al.)  
\end{remark}

In the setting of deformation functors, the next step  after the description of the tangent space is the study of an obstruction space. As well known, in the deformation functors case both spaces have a meaning in term of the corresponding moduli space. Unfortunately, our functor $\Def_E^k$ is not a deformation functor (see Remark \ref{rmk.no def funct}). However, Definition \ref{def.smoothness} holds for $\Def_E^k$  and in the following we try to get some geometrical information linked to its smoothness. 
 
\begin{proposition} \label{prop.alpha sur equivalenza di liscezze}
As above, let $E$ be a locally free sheaf of $\OO_X$-modules on the projective variety $X$, such that $h^0(X,E) \geq k$.
If there exists an $U \in  Gr(k,H^0(X, E))$ such that $\Hom(U, H^1(X, E))=0$ or, in an equivalent way, such that the map $\alpha_U:H^1(X, \End E) \to \Hom(U, H^1(X,E)) $ that appears in \eqref{seconda successione lunga comologia DU} is surjective, then
\[ \Def_E \mbox{ is smooth } \Leftrightarrow \Def_{(E,U)} \mbox{ is smooth } \Leftrightarrow \Def^k_E \mbox{ is smooth. } \]
\end{proposition}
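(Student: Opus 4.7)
I would split the claim into two equivalences and observe that, under the hypothesis, each reduces to a previously established result.

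The first equivalence $\Def_E\ \text{smooth} \Leftrightarrow \Def_{(E,U)}\ \text{smooth}$ is essentially Remark~\ref{cor. alpha sur}. Indeed, the hypothesis $\Hom(U,H^1(E))=0$ (equivalently, $\alpha_U$ surjective, by Remark~\ref{remark alpha su allora Hom(U, H^1(E))=0}) gives by Corollary~\ref{prop.rel obstr classica} (or Corollary~\ref{corollario alpha su implica r liscio}) that the forgetful morphism $r_U : \Def_{(E,U)} \to \Def_E$ is smooth. Proposition~\ref{prop liscezza funtori} then yields the desired equivalence.

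For the equivalence with smoothness of $\Def_E^k$, the key observation is that the existence of a single $U$ as in the hypothesis forces the subfunctor $\Def_E^k$ to coincide with $\Def_E$. Recall from the remark after Definition~\ref{def.smoothness} that any smooth morphism $\varphi : F \to G$ induces a surjection $F(A) \to G(A)$ for every $A \in \Art_\CC$. Applying this to $r_U$, I would get that $r_U(A) : \Def_{(E,U)}(A) \to \Def_E(A)$ is surjective for every $A$. By definition of $\Def_E^k$, the chain of inclusions
\[
\Def_E(A) \;=\; r_U\bigl(\Def_{(E,U)}(A)\bigr) \;\subseteq\; \Def_E^k(A) \;\subseteq\; \Def_E(A)
\]
holds, so $\Def_E^k(A) = \Def_E(A)$ for every $A$, i.e.\ $\Def_E^k = \Def_E$ as functors of Artin rings. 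Hence smoothness of $\Def_E^k$ is equivalent to smoothness of $\Def_E$, closing the triangle.

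The argument is essentially a repackaging of Corollary~\ref{prop.rel obstr classica} and Proposition~\ref{prop liscezza funtori}; the only point requiring attention is recognizing that one surjective $r_U$ is enough to collapse $\Def_E^k$ onto $\Def_E$, which makes the second equivalence tautological once the first is in hand. I therefore expect no genuine obstacle.
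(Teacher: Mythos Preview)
Your proof is correct and follows essentially the same route as the paper: both use Corollaries~\ref{prop.rel obstr classica}/\ref{corollario alpha su implica r liscio} to deduce that $r_U$ is smooth, invoke Proposition~\ref{prop liscezza funtori} (via Remark~\ref{cor. alpha sur}) for the first equivalence, and exploit the surjectivity of $r_U(A)$ for every $A$ to handle the second. The one minor difference is presentational: you make explicit the identity $\Def_E^k = \Def_E$ (which is a slightly sharper statement than the proposition itself), whereas the paper phrases the second equivalence directly as ``every $E_A\in\Def_E^k(A)$ comes from $\Def_{(E,U)}(A)$'' and leaves the rest implicit; the underlying argument is the same.
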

\begin{proof}
From Corollaries \ref{prop.rel obstr classica} and \ref{corollario alpha su implica r liscio}, the two equivalent hypothesis imply that the forgetful morphism $r_U$ is smooth. Then, the first equivalence is a direct consequence of Remark \ref{cor. alpha sur}.  In regard to the second equivalence, since the obstruction is complete, each $E_A \in \Def_{E}^k(A)$ comes from a pair $(E_A, i_A) \in \Def_{(E,U)}(A)$, for every $A\in \Art_\kk$. The above argument implies obviously the equivalence between the smoothness of $\Def_{(E,U)}$ and $\Def_E^k$ . 

\end{proof}

\begin{proposition}  \label{prop.H2=0 liscezza}
 In the above notation, if there exists an $U \in  Gr(k,H^0(E))$ such that $H^2(D_U)=0$, then both the functors $\Def_{(E,U)}$ and $\Def_E^k$ are smooth. 
\end{proposition}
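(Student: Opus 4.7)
The plan is to decompose the statement into its two assertions and dispatch each using tools already established in the paper. First I would argue that $\Def_{(E,U)}$ is smooth: by Proposition \ref{dgla D controlla def (E,U)} this functor is isomorphic to $\Def_{D_U}$, and by Corollary \ref{cor.sp tang e ostr Def(E,U)} its obstructions are contained in $H^2(D_U)$. The hypothesis $H^2(D_U)=0$ therefore forces every obstruction to vanish, so every deformation of the pair over $A$ lifts to $B$ for any surjection $B\to A$ in $\Art_\CC$, which is exactly the definition of smoothness.

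For the smoothness of $\Def_E^k$ I would first exploit the long exact sequence \eqref{seconda successione lunga comologia DU}: the relevant piece is
\[
H^1(X,\End E) \stackrel{\alpha_U}{\longrightarrow} \Hom(U,H^1(X,E)) \stackrel{\beta}{\longrightarrow} H^2(D_U),
\]
and since $H^2(D_U)=0$, exactness gives at once that $\alpha_U$ is surjective. This puts us precisely in the situation of Proposition \ref{prop.alpha sur equivalenza di liscezze}, which then transports smoothness between $\Def_{(E,U)}$ and $\Def_E^k$. Combining with the previous step yields smoothness of $\Def_E^k$.

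I do not expect any real obstacle: each step is a direct invocation of a previously established result, and the only thing to verify is that $H^2(D_U)=0$ is strong enough to trigger both mechanisms. It is, because it simultaneously kills the obstruction space of $\Def_{D_U}\cong \Def_{(E,U)}$ and, via exactness, supplies the hypothesis of Proposition \ref{prop.alpha sur equivalenza di liscezze}. Thus the same vanishing hypothesis handles the absolute smoothness of the pair functor and, through the equivalence of smoothnesses, the smoothness of $\Def_E^k$.
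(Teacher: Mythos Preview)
Your proof is correct and follows essentially the same line as the paper's. The only cosmetic difference is that the paper argues directly that the relative obstruction of $r_U$ vanishes (since it lives in $H^2(D_U)$), hence $r_U$ is smooth, and then concludes; you instead observe that $H^2(D_U)=0$ forces $\alpha_U$ to be surjective and invoke Proposition~\ref{prop.alpha sur equivalenza di liscezze}, which packages exactly that smoothness-of-$r_U$ argument. Both routes unwind to the same mechanism.
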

\begin{proof}
Since $H^2(D_U)=0$, the functor $\Def_{(E,U)}$ is smooth and  relative obstruction to $r_U$ is zero, thus $r_U$ is smooth too.
These two properties assure that $\Def_E^k$ is smooth too. 
\end{proof}

\begin{remark}
In general, the hypothesis $H^2(D_U)=0$ implies strictly that $\alpha_U$ is surjective. Since for a curve they are both equivalent to the injectivity of the Petri map (see Lemma \ref{lemma.equiv h2 zero e petri iniettiva}), Proposition \ref{prop.H2=0 liscezza} assures that on a curve $C$, if there exists $U \in  Gr(k,H^0(E))$, such that the Petri map $\mu_0: U \otimes H^0(C, K_C \otimes E^*) \to H^0(C,  K_C \otimes E^*\otimes E)$ is injective, then both the functors $\Def_{(E,U)}$ and $\Def_E^k$ are smooth. See \cite[Proposition 2.1]{casalaina texidor} for a similar result, there the authors assume the injectivity of the Petri map for every $U\in  Gr(k,H^0(E))$. 
\end{remark}

\end{document}